\documentclass[11pt]{amsart}
\usepackage{graphicx,subcaption}
\usepackage{amsmath} 
\usepackage{amsthm,amsfonts,amssymb,mathrsfs,amscd,amstext,amsbsy} 
\usepackage{epic,eepic} 
\usepackage{yfonts}
\usepackage{paralist,enumerate}
\usepackage[all]{xy}
\usepackage{hyperref}
\hypersetup{colorlinks}

\usepackage{tikz}
\usetikzlibrary {positioning}

\newtheorem{theorem}{Theorem}[section]
\newtheorem{cor}[theorem]{Corollary}
\newtheorem{lemma}[theorem]{Lemma}

\newtheorem{lem}[theorem]{Lemma}
\newtheorem{pro}[theorem]{Proposition}

\newtheorem{definition}[theorem]{Definition}
\newtheorem*{Definition*}{Definition}
\numberwithin{equation}{section}

\def\qed{\hfill \ifhmode\unskip\nobreak\fi\quad\ifmmode\Box\else$\Box$\fi\\ }

\begin{document}

\title[8-dimensional almost complex $S^1$-manifold with 4 fixed points]{Circle actions on 8-dimensional almost complex manifolds with 4 fixed points}
\author{Donghoon Jang}
\thanks{Donghoon Jang is supported by Basic Science Research Program through the National Research Foundation of Korea(NRF) funded by the Ministry of Education(2018R1D1A1B07049511).}
\address{Department of Mathematics, Pusan National University, Pusan, Korea}
\email{donghoonjang@pusan.ac.kr}

\begin{abstract}
Consider a circle action on an 8-dimensional compact almost complex manifold with 4 fixed points. To the author's knowledge, $S^2 \times S^6$ is the only known example of such a manifold. In this paper, we prove that if the circle acts on an 8-dimensional compact almost complex manifold $M$ with 4 fixed points, all the Chern numbers and the Hirzebruch $\chi_y$-genus of $M$ agree with those of $S^2 \times S^6$. In particular, $M$ is unitary cobordant to $S^2 \times S^6$.
\end{abstract}

\maketitle

\tableofcontents

\section{Introduction}

$\indent$

The purpose of this paper is to discuss the classification of circle actions on almost complex manifolds with discrete fixed point sets and give a classification if the dimension of the manifold is eight and if there are exactly four fixed points.

Let the circle act on a $2n$-dimensional almost complex manifold $(M,J)$. Throughout this paper, any circle action on an almost complex manifold $(M,J)$ is assumed to preserve the almost complex structure $J$. Let $p$ be an isolated fixed point. Then the local action of the circle near $p$ can be identified with
\begin{center}
$g \cdot (z_1, \cdots, z_n) = (g^{w_{p,1}}z_1, \cdots, g^{w_{p,n}} z_n)$,
\end{center}
where $g \in S^1 \subset \mathbb{C}$, $z_i$ are complex numbers, and $w_{p,i}$ are non-zero integers. The non-zero integers $w_{p,1},\cdots,w_{p,n}$ are called the \textbf{weights} at $p$. If the fixed point set is discrete, the weights at the fixed points determine the Hirzebruch $\chi_y$-genus of $M$ (which determines the Todd genus, the signature, and the Euler characteristic of $M$), and the Chern numbers of $M$ (which determine the equivariant cobordism class of $M$).

We discuss the classification of circle actions on compact almost complex manifolds that have few fixed points. For this, let the circle act on a compact almost complex manifold $M$.

First, suppose that there is exactly one fixed point. Then $M$ must be the point.

Second, suppose that there are exactly two fixed points. Then either $M$ is the 2-sphere, or $\dim M=6$ and the weights at the fixed points agree with a circle action on $S^6$; see Theorem \ref{t211}. This is proved in \cite{J3}, whose proof closely follows the proof for a holomorphic vector field on a complex manifold with simple isolated zeroes by Kosniowski \cite{K1}. Pelayo and Tolman proved an analogous result for symplectic $S^1$-actions on symplectic manifolds \cite{PT}.

Third, suppose that there are exactly three fixed points. Then the manifold must be four dimensional, and the weights at the fixed points agree with a standard linear action on $\mathbb{CP}^2$; see Theorem \ref{t211}. The result is announced in \cite{J3}, which is proved by carefully adapting the proof for a symplectic circle action on a compact symplectic manifold with 3 fixed points in \cite{J1}.

Fourth, suppose that there are exactly four fixed points. If $\dim M=2$, $M$ is a disjoint union of two $S^2$'s, each of which is equipped with a rotation. If $\dim M=4$, the weights at the fixed points agree with a circle action on a Hirzebruch surface; see Theorem \ref{t212}. If $\dim M=6$, six types occur for the weights at the fixed points, $\mathbb{CP}^3$ type, complex quadric in $\mathbb{CP}^4$ type, Fano 3-fold type, $S^6 \cup S^6$ type, blow up of a fixed point of a rotation on $S^6$-type, and one unknown type. These results are proved in \cite{J2}. For related results, see \cite{A} and \cite{T}.

Now, let us consider the case of an 8-dimensional compact almost complex manifold with 4 fixed points. To the author's knowledge, $S^2 \times S^6$ is the only example of such a manifold. In this paper, we prove that if an 8-dimensional manifold $M$ has 4 fixed points, $M$ has the same Chern numbers and Hirzebruch $\chi_y$-genus as $S^2 \times S^6$; in particular, $M$ is unitary cobordant to $S^2 \times S^6$.

\begin{theorem} \label{t11} Let the circle act on an 8-dimensional compact almost complex manifold $M$ with 4 fixed points. Then the Hirzebruch $\chi_y$-genus of $M$ is $\chi_y(M)=-y+2y^2-y^3$ and the Chern numbers of $M$ are 
\begin{center}
$\displaystyle \int_M c_1^4=\int_M c_1^2 c_2=\int_M c_2^2=0$, and $\displaystyle \int_M c_1 c_3=\int_M c_4=4$. 
\end{center}
In particular, $M$ is unitary cobordant to $S^2 \times S^6$. \end{theorem}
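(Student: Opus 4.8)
The plan is to extract, from the hypothesis of four isolated fixed points, enough combinatorial data about the weights to pin down both the $\chi_y$-genus and all five Chern numbers.

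The plan is to reduce everything to the fixed-point weight data through Atiyah--Bott--Berline--Vergne (ABBV) localization. For each of the four fixed points $p$, let $w_{p,1},\dots,w_{p,4}$ be its weights and write $e_k(p)$ for the $k$-th elementary symmetric polynomial in them, so $e_4(p)=\prod_j w_{p,j}$. The equivariant Chern classes restrict as $c_k^{S^1}(TM)|_p=e_k(p)\,t^k$ and the equivariant Euler class is $e^{S^1}(T_pM)=e_4(p)\,t^4$, so localizing the five degree-$8$ monomials yields
\begin{equation*}
\int_M c_1^4=\sum_p\frac{e_1(p)^4}{e_4(p)},\qquad \int_M c_1^2c_2=\sum_p\frac{e_1(p)^2e_2(p)}{e_4(p)},\qquad \int_M c_2^2=\sum_p\frac{e_2(p)^2}{e_4(p)},
\end{equation*}
\begin{equation*}
\int_M c_1c_3=\sum_p\frac{e_1(p)e_3(p)}{e_4(p)},\qquad \int_M c_4=\sum_p\frac{e_4(p)}{e_4(p)}=4.
\end{equation*}
Thus $\int_M c_4=\chi(M)=4$ is free, and the theorem reduces to evaluating the $\chi_y$-genus and the remaining four sums from the weights alone.

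For the $\chi_y$-genus I would use the localization identity $\chi_y(M)=\sum_p(-y)^{\lambda_p}=\sum_{i=0}^4 N_i(-y)^i$, where $\lambda_p$ is the number of negative weights at $p$ and $N_i=\#\{p:\lambda_p=i\}$. The functional equation $\chi_y(M)=(-y)^4\chi_{1/y}(M)$ forces $N_i=N_{4-i}$, and with $\sum_i N_i=4$ only six candidate distributions remain. Applying ABBV to the constant $1$ gives $\sum_p 1/e_4(p)=0$, in which the $p$-th term has sign $(-1)^{\lambda_p}$; hence every admissible configuration must contain fixed points of both parities of $\lambda_p$. This single relation eliminates the four distributions whose indices all have one parity, leaving only $\chi_y=-y+2y^2-y^3$ (the target, with $(N_1,N_2,N_3)=(1,2,1)$) and the Todd-genus-$1$ configuration $(N_0,N_1,N_3,N_4)=(1,1,1,1)$.

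Excluding this last configuration, and then evaluating the four remaining Chern numbers, is where the real work lies. Applying ABBV to equivariant classes of degree $2d<8$ gives $\sum_p P(w_p)/e_4(p)=0$ for every symmetric polynomial $P$ of degree $d\in\{0,1,2,3\}$—seven linear relations on the sixteen weights. These, together with the integrality of the weights and the isotropy structure (the fixed components of the finite subgroups $\mathbb{Z}_k\subset S^1$ through $p$ link $p$ to the other fixed points and impose congruences among the weights), should cut the configuration down to a short list on which the degree-$4$ sums are forced to equal $0,0,0,4$. I expect this combinatorial determination of the weights to be the main obstacle: the rigid genera assembled in $\chi_y$ pin down only three of the five independent Chern numbers (Todd genus $0$, signature $0$, and $c_4=4$), so at least two of them—say $\int_M c_1^4$ and $\int_M c_2^2$—carry genuinely non-multiplicative information that no rigidity theorem supplies and that must be extracted directly from the weights.

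Once all five Chern numbers are shown to coincide with those of $S^2\times S^6$, the final assertion is immediate: the (stably) almost complex cobordism class of a closed manifold is detected by its Chern numbers (Milnor--Novikov), and $S^2\times S^6$ realizes exactly the tuple $(0,0,0,4,4)$, so $M$ is unitary cobordant to $S^2\times S^6$.
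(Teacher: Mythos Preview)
Your proposal is a plan rather than a proof, and you correctly flag the main gap yourself: ruling out the Todd-genus-$1$ profile $(N_0,N_1,N_2,N_3,N_4)=(1,1,0,1,1)$. Your parity argument from $\sum_p 1/e_4(p)=0$ is valid and nicely narrows the possibilities to the same two cases the paper obtains (the paper instead invokes the existence of consecutive nonzero $N_i,N_{i+1}$), but the seven low-degree ABBV relations you list are not by themselves enough to exclude the bad case; the paper carries out a substantial case analysis. It first shows (via a multigraph encoding of Proposition~\ref{p23} with extra structure from Lemma~\ref{l28}) that the smallest and second smallest positive weights must label edges $p_0\to p_1$ and $p_3\to p_4$, which forces one of four graph shapes. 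Each shape is then eliminated using the pairing relations of Lemma~\ref{l32} together with the isotropy submanifolds $M^{\mathbb{Z}_k}$ and the known classifications in dimensions $\le 6$ (Theorems~\ref{t211}, \ref{t212}, Lemma~\ref{l213}). Your congruence idea is in the right spirit, but without the graph bookkeeping and the low-dimensional classification input it is not clear how you would close the argument.

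There is also a second, more subtle gap: you assume that computing $\int_M c_1^4$ and $\int_M c_2^2$ will require pinning down the weights. The paper avoids this entirely. Once $\chi_y(M)=-y+2y^2-y^3$ is established, the three $\chi^i$-relations together with $\int_M c_4=4$ yield $\int_M c_1c_3=4$ and $\int_M(-c_1^4+4c_1^2c_2+3c_2^2)=0$, so only one further equation is needed. This comes from Lemma~\ref{l32} (a Pelayo--Tolman type statement): since there are at most four distinct values of $c_1|_p=e_1(p)$, either all $e_1(p)=0$, or the four fixed points split into two pairs with equal $e_1$ and opposite $e_4$. In either case $\int_M c_1^4=\sum_p e_1(p)^4/e_4(p)=0$ immediately, and in the paired case $\int_M c_1^2c_2=a^2\sum_p e_2(p)/e_4(p)=a^2\int_M c_2=0$ by degree. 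Thus $\int_M c_2^2=0$ falls out of the $\chi_y$-relation with no weight classification at all. This shortcut is the key idea you are missing for the Chern-number half of the theorem.
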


We will prove Theorem \ref{t11} in Section \ref{s3}. In \cite{S}, Sabatini raised a question what all the possible values of the Chern numbers are for a circle action on a compact almost complex manifold with a discrete fixed point set. Theorem \ref{t11} answers the question if the dimension of the manifold is 8 and if there are 4 fixed points; in this case, the Chern numbers are unique. On the other hand, if the dimension is eight and if there are 5 fixed points, the Chern numbers may not be unique; see \cite{GS}.

The ideas of the proof of Theorem \ref{t11} are to use the Atiyah-Singer index theorem and the ABBV localization theorem (and their consequences), and to utilize a multigraph associated to $M$ by means of the fixed point set and the weights at the fixed points. The necessary background and preliminaries are given in Section \ref{s2}.

Unlike the results for fewer fixed points or for lower dimensions, we do not classify the weights at the fixed points. Classifying the weights at the fixed points would involve many cases to consider and each case would involve a lengthy computation.

Beyond 4 fixed points, in the case of $\dim M=8$ with 5 fixed points, Godinho and Sabatini determined the (equivariant) Chern classes and the (equivariant) cohomology ring of a Hamiltonian circle action on an 8-dimensional compact symplectic manifold with 5 fixed points under an additional assumption. In \cite{JT}, it was shown that the additional assumption always holds for such a manifold.

A natural question is whether or not there exists a circle action on a compact almost complex manifold with 4 fixed points in higher dimensions. While $S^6 \times S^6$ provides a 12-dimensional example, to the author's knowledge, in dimensions higher than 8 and other than 12, there are no known examples. We discuss the question in more details in Section \ref{s4}. The question is related to Kosniowski's conjecture, which asserts that if the circle acts on a compact unitary manifold $M$ with isolated fixed points and $M$ does not bound a unitary manifold equivariantly, then $M$ has at least $\frac{\dim M}{4}+1$ fixed points \cite{K2}. 

\section{Background and preliminaries} \label{s2}

In this section, we give background and provide necessary materials to prove our main result, Theorem \ref{t11}. 

Let the circle act on a compact oriented manifold $M$. The \textbf{equivariant cohomology} of $M$ is
\begin{center}
$H_{S^1}^*(M) := H^*(M \times_{S^1} S^{\infty})$. 
\end{center}
The projection map $\pi:M \to \{\textrm{pt}\}$ induces a push-forward map
\begin{center}
$\displaystyle \int_M := \pi_* : H_{S^1}^i (M;\mathbb{Z}) \longrightarrow H^{i - \dim M} (\mathbb{CP}^\infty ; \mathbb{Z})$.
\end{center}
The Atiyah-Bott-Berline-Vergne localization theorem states that the push-forward map can be computed by the fixed point data.

\begin{theorem} [ABBV localization theorem]\cite{AB} \label{t21} Let the circle act on a compact oriented manifold $M$. For any $\alpha \in H_{S^1}^*(M;\mathbb{Q})$,
\begin{center}
$\displaystyle \int_M \alpha = \sum_{F \subset M^{S^1}} \int_F \frac{\alpha|_F}{e_{S^1}(N_F)}$.
\end{center}
Here, the sum is taken over all fixed components, and $e_{S^1}(N_F)$ is the equivariant Euler class of the normal bundle of $F$.
\end{theorem}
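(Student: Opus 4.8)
The plan is to deduce the stated integration formula from the \emph{localization isomorphism} in equivariant cohomology, which is the real content; the formula itself then follows by a formal manipulation with Gysin pushforwards. Write $H_{S^1}^*(\mathrm{pt}) = \mathbb{Q}[t]$ with $\deg t = 2$, set $S = \mathbb{Q}[t] \setminus \{0\}$, and write $S^{-1}(-)$ for localization, so $S^{-1} H_{S^1}^*(X) = H_{S^1}^*(X) \otimes_{\mathbb{Q}[t]} \mathbb{Q}(t)$. For each fixed component $F$ let $i_F : F \hookrightarrow M$ denote the inclusion, with equivariant restriction $i_F^*$ and pushforward $i_{F*}$. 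I would first record two algebraic inputs. The clean self-intersection formula gives $i_F^* i_{F*} = e_{S^1}(N_F) \cdot (-)$ on $H_{S^1}^*(F)$, while $i_{F'}^* i_{F*} = 0$ when $F \neq F'$, since disjoint components can be separated by invariant neighborhoods. Moreover, because $S^1$ acts on $N_F$ with only nonzero weights $m_1, \dots, m_d$, restriction to a point gives $e_{S^1}(N_F) = (m_1 \cdots m_d)\, t^{d} + (\text{terms of positive degree in } H^*(F))$, whose leading coefficient is a nonzero scalar; hence $e_{S^1}(N_F)$ becomes a unit in $S^{-1} H_{S^1}^*(F)$.

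The heart of the argument is to show that the restriction map $\bigoplus_F i_F^* : H_{S^1}^*(M) \to \bigoplus_F H_{S^1}^*(F)$ is an isomorphism after applying $S^{-1}$. I would cover $M$ by two invariant open sets: an equivariant tubular neighborhood $V$ of $M^{S^1}$, which equivariantly retracts onto $M^{S^1}$, and the complement $U = M \setminus M^{S^1}$. On $U$, and on $U \cap V$, every stabilizer is a proper, hence finite, subgroup of $S^1$, so the action is locally free; then with rational coefficients $H_{S^1}^*(\,\cdot\,;\mathbb{Q}) \cong H^*((\,\cdot\,)/S^1;\mathbb{Q})$, a module over a space of dimension $< \dim M$ on which the degree-two class $t$ acts nilpotently, so $S^{-1} H_{S^1}^*(U) = 0$ and $S^{-1} H_{S^1}^*(U \cap V) = 0$. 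Feeding this into the localized equivariant Mayer--Vietoris sequence collapses it to an isomorphism $S^{-1} H_{S^1}^*(M) \xrightarrow{\sim} S^{-1} H_{S^1}^*(V) \cong S^{-1} H_{S^1}^*(M^{S^1})$, which is exactly the localization isomorphism.

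Granting this, any $\alpha \in H_{S^1}^*(M;\mathbb{Q})$ can be written, after inverting $t$, as $\alpha = \sum_F i_{F*}(\beta_F)$. Restricting to a component $F'$ and invoking the two inputs above gives $\alpha|_{F'} = e_{S^1}(N_{F'})\,\beta_{F'}$, and since $e_{S^1}(N_{F'})$ is now a unit we solve $\beta_{F'} = \alpha|_{F'} / e_{S^1}(N_{F'})$. Thus $\alpha = \sum_F i_{F*}\big(\alpha|_F / e_{S^1}(N_F)\big)$ in $S^{-1} H_{S^1}^*(M)$. Applying $\int_M = \pi_*$, extended $\mathbb{Q}(t)$-linearly, together with the functoriality $\pi_* \circ i_{F*} = (\pi \circ i_F)_* = \int_F$, yields $\int_M \alpha = \sum_F \int_F \alpha|_F / e_{S^1}(N_F)$, the asserted identity; since the left-hand side already lies in $\mathbb{Q}[t]$, the equality of localized classes is an honest one.

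The main obstacle is the localization isomorphism, and within it the vanishing $S^{-1} H_{S^1}^*(U) = 0$ on the locally free part. The essential point — and the reason for working with $\mathbb{Q}$ coefficients — is that the finite stabilizers $\mathbb{Z}_k$ contribute $H^*(B\mathbb{Z}_k;\mathbb{Q})$, which vanishes in positive degrees, so the Borel construction over the finite-dimensional orbit space $U/S^1$ carries no $t$-divisible cohomology; it is precisely this finite-dimensionality of the quotient that makes multiplication by $t$ nilpotent and hence kills the module after localizing.
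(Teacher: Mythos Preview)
The paper does not supply a proof of this statement: Theorem~\ref{t21} is quoted as background from \cite{AB} and used as a black box throughout, so there is no ``paper's own proof'' to compare against. Your argument is the standard localization-theorem proof (Borel--Atiyah--Segal localization isomorphism via Mayer--Vietoris on a tubular neighborhood of $M^{S^1}$ and its complement, then the formal Gysin manipulation), and it is essentially correct; the one step worth making explicit is that surjectivity of $\sum_F i_{F*}$ after inverting $t$ follows because $(\bigoplus_F i_F^*)\circ(\sum_F i_{F*})$ is diagonal multiplication by the invertible $e_{S^1}(N_F)$ and $\bigoplus_F i_F^*$ is already an isomorphism, so $\sum_F i_{F*}$ is one too.
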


The Hirzebruch $\chi_y$-genus is the genus associated to the power series $\frac{x(1+ye^{-x(1+y)})}{1-e^{-x(1+y)}}$. For a $2n$-dimensional compact almost complex manifold $M$, let $\chi_y(M) = \sum_{i=0}^n \chi^i(M) \cdot y^i$ be the Hirzebruch $\chi_y$-genus of $M$. Here, $\chi^i(M)=\int_M T_i^n$, where $T_i^n$ is a rational combination of products of Chern classes $c_{j_1} \cdots c_{j_k}$ such that $j_1+\cdots+j_k=n$. Note that $\chi_{-1}(M)$ is the Euler characteristic of $M$, $\chi_0(M)=\textrm{Todd}(M)$ is the Todd genus of $M$, and $\chi_1(M)=\textrm{sign}(M)$ is the signature of $M$.

Let the circle act on a compact almost complex manifold with a discrete fixed point set. As an application of the Atiyah-Singer index theorem, Li obtained the following formula, whose proof follows the proof in \cite{HBJ} for a complex manifold.

\begin{theorem} \label{t22} \cite{HBJ}, \cite{L} Let the circle act on a $2n$-dimensional compact almost complex manifold $M$ with a discrete fixed point set. Then for each $i$ such that $0 \leq i \leq n$,
\begin{center}
$\displaystyle \chi^i(M) = \sum_{p \in M^{S^1}} \frac{\sigma_i(g^{w_{p,1}},\cdots,g^{w_{p,n}})}{\prod_{j=1}^n (1-g^{w_{p,j}})}=(-1)^i N_i=(-1)^i N_{n-i}$.
\end{center}
Here, $\chi_y(M)=\sum_{i=0}^n \chi^i(M) \cdot y^i$ is the Hirzebruch $\chi_y$-genus of $M$, $g$ is an indeterminate, $\sigma_i$ is the $i$-th elementary symmetric polynomial in $n$ variables, and $N_i$ is the number of fixed points that have exactly $i$ negative weights.
\end{theorem}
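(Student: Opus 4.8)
The plan is to realize each $\chi^i(M)$ as the $S^1$-equivariant index of a suitable elliptic operator and then apply the Atiyah--Singer index theorem in its equivariant (fixed-point) form. The Hirzebruch $\chi_y$-genus is the index of the $\mathrm{Spin}^c$ Dirac operator twisted by $\Lambda_y T^{*(1,0)}M=\sum_i y^i\,\Lambda^i T^{*(1,0)}M$, so that $\chi^i(M)$ is the index of the summand twisted by $\Lambda^i T^{*(1,0)}M$ (for integrable $J$ this is the alternating sum $\sum_q (-1)^q h^{i,q}$ of Dolbeault cohomology). Since the action preserves $J$, it lifts to these operators, and the equivariant index $\Phi_i(g)$ is a virtual character of $S^1$, hence a Laurent polynomial in $g$ whose value at $g=1$ is the ordinary index $\chi^i(M)$. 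For a discrete fixed point set the fixed-point formula expresses $\Phi_i(g)$ as a sum of local contributions, one per fixed point; at $p$ the linearized action acts on $T_pM$ with eigenvalues $g^{w_{p,j}}$, and assembling the whole generating series the local term is $\prod_{j=1}^n \frac{1+y\,g^{w_{p,j}}}{1-g^{w_{p,j}}}$. Expanding $\prod_j(1+y\,g^{w_{p,j}})=\sum_i \sigma_i(g^{w_{p,1}},\dots,g^{w_{p,n}})\,y^i$ and reading off the coefficient of $y^i$ yields the first equality $\chi^i(M)=\sum_p \frac{\sigma_i(g^{w_{p,1}},\dots,g^{w_{p,n}})}{\prod_{j=1}^n(1-g^{w_{p,j}})}$.

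The next step is to show that this rational function is in fact constant in $g$, and then to evaluate it. I would work with the generating function $\Phi(g)=\sum_p \prod_{j=1}^n \frac{1+y\,g^{w_{p,j}}}{1-g^{w_{p,j}}}$, treating $y$ as a parameter. Since $\Phi(g)$ equals the equivariant index, it is a Laurent polynomial in $g$ (with coefficients polynomial in $y$); in particular it has no poles at $g=0$ or $g=\infty$. On the other hand, each local term has a finite limit at both ends: as $g\to 0$ a factor with $w_{p,j}>0$ tends to $1$, while one with $w_{p,j}<0$, after multiplying numerator and denominator by $g^{-w_{p,j}}$, tends to $-y$; hence the $p$-th term tends to $(-y)^{\lambda_p}$, where $\lambda_p$ is the number of negative weights at $p$. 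Thus the Laurent polynomial $\Phi(g)$ is bounded as $g\to 0$ and, by the symmetric computation, as $g\to\infty$, which forces it to be constant (only the degree-zero term survives); this constant is $\Phi(1)=\chi^i(M)$ in each $y$-degree.

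Evaluating the constant at the two ends then finishes the proof. Letting $g\to 0$ gives $\Phi(g)=\sum_p (-y)^{\lambda_p}=\sum_i N_i(-y)^i$, so $\chi^i(M)=(-1)^i N_i$. Letting $g\to\infty$, a factor with $w_{p,j}>0$ now tends to $-y$ and one with $w_{p,j}<0$ tends to $1$, so the $p$-th term tends to $(-y)^{n-\lambda_p}$ and $\Phi(g)=\sum_i N_i(-y)^{n-i}=\sum_i N_{n-i}(-y)^i$, giving $\chi^i(M)=(-1)^i N_{n-i}$. Comparing the two expressions also yields $N_i=N_{n-i}$, completing the statement.

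The step I expect to be the main obstacle is the first one: justifying the local fixed-point contribution in the almost complex (possibly non-integrable) setting, where the classical holomorphic Lefschetz formula of Atiyah--Bott does not directly apply because there is no $\bar\partial$-complex. The resolution, following \cite{HBJ} and carried out in \cite{L}, is to use the Atiyah--Singer index theorem for the associated $\mathrm{Spin}^c$ Dirac operator: the local contribution depends only on the symbol and the linearized isotropy representation at the fixed point, so it is still computed purely from the weights $w_{p,j}$ and produces exactly the factors above. Once this local formula is in hand, the remaining rigidity and limit computations are elementary manipulations of rational functions.
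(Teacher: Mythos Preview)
The paper does not prove this theorem; it quotes it as a background result from \cite{HBJ} and \cite{L}, prefacing it only with the sentence ``As an application of the Atiyah--Singer index theorem, Li obtained the following formula, whose proof follows the proof in \cite{HBJ} for a complex manifold.'' Your sketch is precisely the argument in those references: apply the Lefschetz/Atiyah--Singer fixed-point formula to the $\mathrm{Spin}^c$ Dirac operator twisted by $\Lambda_y T^{*(1,0)}M$ to obtain the local terms $\prod_j \frac{1+y\,g^{w_{p,j}}}{1-g^{w_{p,j}}}$, observe that the equivariant index is a Laurent polynomial (a virtual character) with finite limits at $g\to 0$ and $g\to\infty$, conclude it is constant, and read off that constant from the two limits to get $(-1)^i N_i$ and $(-1)^i N_{n-i}$. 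So your approach coincides with what the cited references do, and there is nothing in the paper itself to compare against beyond this.

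One small remark on presentation: the phrase ``$g$ is an indeterminate'' in the statement already encodes the rigidity you prove, namely that the right-hand side is independent of $g$; your argument makes this explicit. Also, the issue you flag as the main obstacle---justifying the fixed-point contribution in the non-integrable almost complex case via the $\mathrm{Spin}^c$ Dirac operator rather than $\bar\partial$---is exactly the point of Li's paper \cite{L}, so your identification of where the substance lies is accurate.
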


Consider a circle action on a compact almost complex manifold with a discrete fixed point set. In \cite{H}, Hattori proved that for each integer $w$, the number of times $w$ occurs as weights over all fixed points, counted with multiplicity, is equal to the number of times $-w$ occurs as weights over all fixed points, counted with multiplicity. Li reproved this in \cite{L}; also see \cite{PT} for a symplectic action.

\begin{pro} \label{p23} \cite{H}, \cite{L}
Let the circle act on a compact almost complex manifold $M$ with a discrete fixed point set. For each integer $w$,
\begin{center}
$\displaystyle \sum_{p \in M^{S^1}} N_p(w)=\sum_{p \in M^{S^1}} N_p(-w)$,
\end{center}
where $N_p(w)=|\{i \ | \ w_{p,i}=w, 1 \leq i \leq n\}|$ is the number of times $w$ occurs as weights at $p$.
\end{pro}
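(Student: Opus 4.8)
The plan is to deduce the weight symmetry from the rigidity of the equivariant Hirzebruch $\chi_y$-genus, i.e. from Theorem~\ref{t22}. Summing the identities of Theorem~\ref{t22} against $y^i$ gives the single identity
\[
\chi_y(M)=\sum_{p\in M^{S^1}}\ \prod_{j=1}^n\frac{1+y\,g^{w_{p,j}}}{1-g^{w_{p,j}}},
\]
in which the left-hand side is a polynomial in $y$ that is independent of $g$. I would read this as an identity of power series in $g$: after multiplying numerator and denominator of each factor with a negative weight $w_{p,j}<0$ by $g^{-w_{p,j}}$, every factor expands as an honest power series in $g$, namely $1+(1+y)g^{w}+\cdots$ when $w>0$ and $-y-(1+y)g^{|w|}-\cdots$ when $w<0$. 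Hence the right-hand side is a power series in $g$ whose value is the constant $\chi_y(M)$, so every coefficient of $g^m$ with $m\geq 1$ must vanish. These vanishing conditions are the relations among the weights that I would mine for the symmetry $\sum_p N_p(w)=\sum_p N_p(-w)$.

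Writing $a_w=\sum_{p}N_p(w)$, I would prove $a_w=a_{-w}$ by induction on $w\geq 1$, keeping the finer bookkeeping of stratifying the fixed points by their number of negative weights. The base case $w=1$ is clean: in the coefficient of $g^1$ only the factors of weight $\pm1$ contribute a $g^1$ term (each other factor can only contribute its constant term $1$ or $-y$), so the vanishing of that coefficient reads, after dividing by $1+y$,
\[
\sum_{p}N_p(1)\,(-y)^{\,n_-(p)}=\sum_{p}N_p(-1)\,(-y)^{\,n_-(p)-1},
\]
where $n_-(p)$ is the number of negative weights at $p$. Comparing coefficients of $(-y)^i$ yields the refined identities relating weight $+1$'s at fixed points with $i$ negative weights to weight $-1$'s at fixed points with $i+1$ negative weights; summing over $i$ collapses these to $a_1=a_{-1}$.

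For the inductive step I would examine the coefficient of $g^w$. Besides the ``main'' terms coming from a single factor of weight $\pm w$, this coefficient also receives contributions from factors whose weights are proper divisors of $w$ and from products of several factors whose weights add up to $w$ --- all of which involve only weights of absolute value strictly less than $w$. The crux of the argument, and the step I expect to be the main obstacle, is to show that, granting the (refined) inductive hypothesis for all $0<w'<w$, these lower-weight contributions cancel, so that the vanishing of the coefficient of $g^w$ again reduces to a clean comparison of the weight-$w$ and weight-$(-w)$ multiplicities and gives $a_w=a_{-w}$. The required cancellation is a bookkeeping matter that is organized by the stratification by $n_-(p)$ and is consistent with the $g\mapsto g^{-1}$ symmetry of the identity (which globally interchanges the roles of positive and negative weights and independently recovers the duality $N_i=N_{n-i}$ of Theorem~\ref{t22}); carrying it out carefully is where the real work lies.
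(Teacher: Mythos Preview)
The paper does not prove Proposition~\ref{p23}; it merely cites Hattori \cite{H} and Li \cite{L}. So there is no in-paper proof to compare against, but I can assess your argument on its own. Your overall strategy---deduce the symmetry from the rigidity identity $\chi_y(M)=\sum_p\prod_j\frac{1+yg^{w_{p,j}}}{1-g^{w_{p,j}}}$ of Theorem~\ref{t22}---is the right one, and your base case $w=1$ is correct. The inductive step, however, has a real gap. The ``lower-weight contributions'' to the coefficient of $g^w$ come from choosing \emph{several} factors at the same fixed point $p$, so they involve quantities such as $\binom{N_p(w')}{2}$ and $N_p(w')N_p(w'')$. These are quadratic (and higher) in the $N_p$'s, whereas your refined inductive hypothesis only controls the \emph{linear} sums $\sum_{n_-(p)=i}N_p(\pm w')$. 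Linear data does not determine quadratic data: for instance, two fixed points with $N_{p}(1)=2,\,N_{q}(1)=0$ versus $N_{p}(1)=N_{q}(1)=1$ give the same linear sums but different values of $\sum_p\binom{N_p(1)}{2}$. So the cancellation you need is not a ``bookkeeping matter''; it does not follow from the hypothesis you are carrying.

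The missing idea is to \emph{linearize} before expanding. Differentiate the identity with respect to $y$ and set $y=-1$: since each factor becomes $1$ at $y=-1$, the product rule collapses and one obtains that
\[
\sum_{p\in M^{S^1}}\ \sum_{j=1}^{n}\ \frac{g^{w_{p,j}}}{1-g^{w_{p,j}}}
\]
is independent of $g$ (equal to $-\sum_i iN_i$). Expanding $\frac{g^{w}}{1-g^{w}}=\sum_{k\ge1}g^{kw}$ for $w>0$ and $\frac{g^{-v}}{1-g^{-v}}=-1-\sum_{k\ge1}g^{kv}$ for $v>0$, the coefficient of $g^m$ ($m\ge1$) is $\sum_{d\mid m}\bigl(a_d-a_{-d}\bigr)$ with $a_w=\sum_pN_p(w)$. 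Its vanishing for all $m\ge1$ gives $a_w=a_{-w}$ for every $w$, by M\"obius inversion or by the obvious induction on $m$. Now the induction goes through precisely because the identity is already linear in the weight multiplicities.
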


Proposition \ref{p23} has the following corollary.

\begin{cor} \cite{H} \label{c24} Let the circle act on a $2n$-dimensional compact almost complex manifold $M$ with a discrete fixed point set. Then 
\begin{center}
$\displaystyle \sum_{p \in M^{S^1}} \sum_{i=1}^n w_{p,i}=0$. 
\end{center} \end{cor}

Proposition \ref{p23} enables us to associate a labeled, directed multigraph to a circle action on a compact almost complex manifold with a discrete fixed point set. First, assign a vertex to each fixed point. For a positive integer $w>0$, Proposition \ref{p23} implies that each time a fixed point $p$ has weight $w$,  there exists a fixed point $q$ that has weight $-w$; we draw an edge from $p$ to $q$ issuing label $w$ to the edge.

\begin{definition} A \textbf{labeled directed multigraph} consists of a set $V$ of vertices, a set $E$ of edges, maps $i:E \to V$ and $t:E \to V$ giving the initial and terminal vertices of each edge, and a map $w:E \to \mathbb{N}^+$. \end{definition}

\begin{definition} Let the circle act on a compact almost complex manifold $M$ with a discrete fixed point set. We say that a labeled directed multigraph with vertex set $M^{S^1}$ \textbf{describes} $M$ if the multiset of weights at $p$ is $\{w(e)\ | \ i(e)=p\} \cup \{-w(e) \ | \ t(e)=p\}$ for all $p \in M^{S^1}$. \end{definition}

Note that this definition is slightly different from the definition in \cite{JT}. Lemma 3.6 of \cite{J2} implies the following lemma; see also Proposition 3 in \cite{JT}.

\begin{lemma} \cite{J2} \label{l24}
Let the circle act on a $2n$-dimensional compact almost complex manifold with a non-empty discrete fixed point set. Let $a$ be the smallest positive weight or the second smallest positive weight. Then for each $i$ such that $0 \leq i \leq n-1$, the number of times the weight $a$ occurs at fixed points which have exactly $i$ negative weights is equal to the number of times the weight $-a$ occurs at fixed points which have exactly $i+1$ negative weights.
\end{lemma}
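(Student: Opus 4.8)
The plan is to exploit the generating-function identities of Theorem \ref{t22}, specialized not to the full genus but to the contributions of individual fixed points, and to track the lowest-order behavior of these rational functions as $g \to 0$ (or equivalently as we expand in the smallest weight). The key observation is that each fixed point $p$ contributes to $\chi^i(M)$ a rational function in $g$ whose expansion encodes how many negative weights $p$ has and which weights occur. The plan is to compare, across the equations indexed by $i$, the coefficient of a suitable power of $g$ determined by the target weight $a$.

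First I would recall from Lemma 3.6 of \cite{J2} the precise mechanism: for the smallest positive weight $a$, the identity $\chi^i(M)=\sum_{p} \sigma_i(g^{w_{p,1}},\dots,g^{w_{p,n}})/\prod_j(1-g^{w_{p,j}})$ is to be read as an equality of rational functions in $g$, valid for all $i$. I would rewrite each summand by factoring out the sign contribution of the negative weights, using the standard manipulation $1/(1-g^{-w})=-g^{w}/(1-g^{w})$, so that a fixed point with exactly $i$ negative weights contributes a factor $(-1)^i$ times a power of $g$ equal to the sum of its negative weights' absolute values, times a function regular and nonzero at $g=0$. This rewriting converts the localization identity into a statement about Laurent expansions at $g=0$, where the order of vanishing is governed by the negative weights and the leading terms are governed by whether $a$ or $-a$ appears.

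Next, I would isolate the lowest relevant power of $g$. Since $a$ is the smallest or second-smallest positive weight, a fixed point with $i$ negative weights can produce the target monomial (after normalization) only through the weight $a$ occurring positively, while a fixed point with $i+1$ negative weights produces the matching monomial through $-a$ occurring, the extra negative weight accounting precisely for the sign flip $(-1)^i \mapsto (-1)^{i+1}$ and the shift in $g$-order. Matching the coefficient of this distinguished power of $g$ in the identity for $\chi^i(M)$—and using that $\chi^i(M)=(-1)^iN_i$ is a constant with no dependence on $g$, so that all nonconstant terms in the Laurent expansion must cancel—yields the desired equality between the count of $a$ at $i$-negative-weight points and the count of $-a$ at $(i+1)$-negative-weight points.

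The main obstacle will be controlling contributions from fixed points whose other weights could interfere at the same power of $g$: a priori a fixed point could produce the distinguished monomial through a combination of several small weights rather than through $a$ alone. This is exactly where the hypothesis that $a$ is the \emph{smallest} or \emph{second smallest} positive weight is essential, and the delicate part of the argument is to verify that no such spurious combinations arise—that only the weight $\pm a$ can contribute to the coefficient being matched. Since this is precisely the content of Lemma 3.6 of \cite{J2} (see also Proposition 3 of \cite{JT}), I would either cite that combinatorial analysis directly or reproduce the short bookkeeping that rules out the competing contributions, rather than re-deriving it from scratch.
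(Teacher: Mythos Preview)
The paper does not supply its own proof of this lemma; it merely records that the statement follows from Lemma~3.6 of \cite{J2} (with a pointer to Proposition~3 of \cite{JT}). Your plan is exactly the argument carried out in that reference: rewrite each localization summand via $1/(1-g^{-w})=-g^{w}/(1-g^{w})$ so that a fixed point with $k$ negative weights contributes $(-1)^k g^{\,\text{(sum of }|w|\text{ over negative weights)}}$ times a unit at $g=0$, then compare the coefficient of the lowest nontrivial power of $g$ in the identity for $\chi^i(M)$. The hypothesis that $a$ is the smallest or second smallest positive weight is used precisely to rule out the ``spurious combinations'' you flag, and your decision to cite \cite{J2} for that bookkeeping matches what the paper itself does. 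So your proposal is correct and coincides with the intended argument; there is no independent proof in the paper to contrast it with.
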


Here, our convention is that the second smallest positive weight may equal the smallest positive weight and there may be other weights that are equal to the second smallest positive weight. Applying Lemma \ref{l24} to the smallest positive weight, we obtain the following lemma, which is Lemma 2.7 in \cite{J3}.

\begin{lemma} \label{l29}
Let the circle act on a compact almost complex manifold with a non-empty discrete fixed point set such that $\dim M>0$. Then there exists $i$ such that $N_i \neq 0$ and $N_{i+1} \neq 0$, where $N_j$ is the number of fixed points that have exactly $j$ negative weights. Alternatively, there exists $i$ such that $\chi^i(M) \neq 0$ and $\chi^{i+1}(M) \neq 0$. \end{lemma}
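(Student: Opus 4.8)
The plan is to apply Lemma \ref{l24} to the smallest positive weight and then read off the conclusion for the $\chi^i$ via Theorem \ref{t22}.

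First I would check that a positive weight exists at all. Since $\dim M > 0$ we have $n \geq 1$, so every fixed point carries $n$ nonzero weights; as the fixed point set is non-empty, at least one nonzero weight occurs. By Proposition \ref{p23} (equivalently by Corollary \ref{c24}, whose total weight sum would otherwise be nonzero) positive and negative weights must both occur, so there is a well-defined smallest positive weight $a$.

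Next, fix a fixed point $p$ at which $a$ occurs, and let $i$ be the number of negative weights at $p$. Because $p$ has the positive weight $a$ among its $n$ weights, at most $n-1$ of its weights are negative, so $0 \leq i \leq n-1$. In particular $p$ is a fixed point with exactly $i$ negative weights, giving $N_i \neq 0$; moreover the number of times $a$ occurs at fixed points having exactly $i$ negative weights is at least $1$. I would then invoke Lemma \ref{l24} with this value of $a$ and this $i$: it equates the (positive) count just described with the number of times $-a$ occurs at fixed points having exactly $i+1$ negative weights. Hence some fixed point has exactly $i+1$ negative weights, i.e.\ $N_{i+1} \neq 0$. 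This produces the desired index $i$ with $N_i \neq 0$ and $N_{i+1} \neq 0$. Finally, Theorem \ref{t22} gives $\chi^j(M) = (-1)^j N_j$, so the two inequalities translate directly into $\chi^i(M) \neq 0$ and $\chi^{i+1}(M) \neq 0$, which is the alternative formulation.

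I do not expect a serious obstacle here, as the argument is essentially a single application of Lemma \ref{l24}. The only point requiring care is the bookkeeping that the index $i$ attached to $p$ lies in the admissible range $0 \leq i \leq n-1$ demanded by Lemma \ref{l24}, which is guaranteed precisely because $p$ already carries the positive weight $a$.
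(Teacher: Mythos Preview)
Your proof is correct and follows exactly the approach indicated in the paper, namely applying Lemma \ref{l24} to the smallest positive weight; you have simply supplied the routine details (existence of a positive weight, the range $0 \le i \le n-1$, and the translation to $\chi^i$ via Theorem \ref{t22}) that the paper leaves implicit.
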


An edge $e$ of a multigraph is called a \textbf{loop} if $i(e)=t(e)$. The following lemma states that there exists a multigraph describing $M$ that does not have any loop; it satisfies additional properties.

\begin{lem} \cite{J4} \label{l28}
Let the circle act on a compact almost complex manifold $M$ with a discrete fixed point set. Then there exists a labeled directed multigraph describing $M$ with the following properties:
\begin{enumerate}[(1)]
\item Given an edge $e$, if $w(e)$ is smaller than or equal to the second smallest positive weight, then $n_{i(e)}+1=n_{t(e)}$, where $n_p$ is the number of negative weights at $p$.
\item Given an edge $e$, if $w(e)$ is strictly bigger than the second smallest positive weight, then the weights at $i(e)$ and the weights at $t(e)$ are equal modulo $w(e)$.
\item The graph has no loops. \end{enumerate} \end{lem}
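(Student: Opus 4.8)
The plan is to build the multigraph one weight value at a time. By Proposition \ref{p23}, for each positive integer $w$ the occurrences of $+w$ and of $-w$ among all fixed points are equinumerous, so it suffices, for each $w>0$, to choose a perfect matching between the $+w$-slots and the $-w$-slots and to draw an edge labeled $w$ from the fixed point carrying $+w$ to the one carrying $-w$. Any such collection of matchings produces a labeled directed multigraph describing $M$, since the outgoing edges at $p$ recover exactly the positive weights at $p$ and the incoming edges recover the negative ones; the content of the lemma is to choose the matchings so that (1), (2), (3) hold. I would split the argument according to whether $w$ is small or large, writing $a_1\le a_2$ for the smallest and second smallest positive weights.

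For a small weight $w$, i.e. $w\le a_2$ (so $w\in\{a_1,a_2\}$), I would invoke Lemma \ref{l24} directly: for every $i$ the number of $+w$'s occurring at fixed points with exactly $i$ negative weights equals the number of $-w$'s occurring at fixed points with exactly $i+1$ negative weights. Matching these slots level by level yields edges $e$ with $n_{i(e)}+1=n_{t(e)}$, which is precisely property (1); and since $i\neq i+1$, these edges automatically carry no loops.

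For a large weight $w$, i.e. $w>a_2$, I would pass to the subgroup $\mathbb{Z}/w\subset S^1$ and its fixed submanifold $M^{\mathbb{Z}/w}$, which is $J$-invariant and inherits an $S^1$-action because $\mathbb{Z}/w$ is central. Fix a connected component $N$; its $S^1$-fixed points are the points of $M^{S^1}$ lying in $N$. At such a point $p$ the weights of $M$ split into those tangent to $N$, which are exactly the $w$-divisible ones, hence $\equiv 0 \pmod{w}$, and those normal to $N$, which are not divisible by $w$. Because the normal bundle of the connected $N$ is a $\mathbb{Z}/w$-equivariant bundle with constant isotropy representation, the normal weights modulo $w$ agree at every $S^1$-fixed point of $N$; together with the tangent weights being $\equiv 0$, the entire weight multiset modulo $w$ is constant over $M^{S^1}\cap N$, so any edge kept inside a single $N$ satisfies property (2). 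To match the $\pm w$-slots inside $N$ without loops, I would observe that the weights of the induced $S^1$-action on $N$ are precisely the $w$-divisible weights, i.e. multiples of $w$, so $w$ is the smallest positive weight on $N$; applying Lemma \ref{l24} to $N$ with $a=w$ then matches $+w$ at $N$-level $i$ to $-w$ at $N$-level $i+1$, a loop-free matching within $N$. Carrying this out on every component accounts for all $\pm w$-slots, and repeating for every $w>a_2$ completes the construction.

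The routine parts are the bookkeeping that these per-$w$ matchings assemble into a single graph describing $M$ and that the properties are mutually compatible, since property (1) constrains only $w\le a_2$ while property (2) constrains only $w>a_2$. The one genuinely delicate point is property (3) for large weights: a priori a single fixed point could carry both $+w$ and $-w$ and force a loop. The idea that resolves this is the reduction to $N$, where $w$ becomes the \emph{smallest} positive weight, so that Lemma \ref{l24} supplies a matching that strictly raises the $N$-level across each edge and thereby precludes loops. Verifying the mod-$w$ constancy in property (2) via constancy of the $\mathbb{Z}/w$-isotropy representation on a connected component is the other step requiring care.
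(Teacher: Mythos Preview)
The paper does not prove this lemma here; it is quoted from \cite{J4}, so there is no in-paper proof to compare against. Your argument is correct and is exactly the natural approach one would expect: handle the two smallest weight values via Lemma~\ref{l24}, and for each larger $w$ pass to the components of $M^{\mathbb{Z}_w}$, where the constancy of the $\mathbb{Z}_w$-isotropy on the normal bundle gives the mod-$w$ agreement (property~(2)), and where $w$ becomes the smallest positive weight so that Lemma~\ref{l24} applied to the component furnishes a level-raising, hence loop-free, matching (property~(3)).

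Two small points worth tightening. First, your sentence ``so $w$ is the smallest positive weight on $N$'' is only literally true for components $N$ in which $\pm w$ actually occurs as a tangent weight; but these are the only components on which you need to draw $w$-labeled edges, so this is harmless once stated. Second, you should remark that Proposition~\ref{p23} applied to each component $N$ guarantees that the number of $+w$ and $-w$ slots within $N$ agree, so the per-component matchings really do exhaust all $\pm w$ occurrences in $M$; you assert this (``accounts for all $\pm w$-slots'') but the reason deserves one line.
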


In \cite{PT}, Pelayo and Tolman proved some property for a symplectic circle action with isolated fixed points in terms of the Chern class map. The proof naturally extends to a circle action on an almost complex manifold. The Chern class map at a fixed point $p$ is nothing but the sum of weights at $p$ and hence we obtain the following lemma.

\begin{lemma} \cite{PT} \label{l29}
Let the circle act on a $2n$-dimensional compact almost complex manifold with a discrete fixed point set. If the set $\{\sum_{i=1}^n w_{p,i} \ | \ p \in M^{S^1}\}$ contains at most $n$ elements, then
\begin{center}
$\displaystyle \sum_{p \in M^{S^1}, \sum_{i=1}^n w_{p,i}=k} \frac{1}{\prod_{i=1}^n w_{p,i}}=0$ for all $k \in \mathbb{Z}$.
\end{center}
\end{lemma}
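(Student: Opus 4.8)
The plan is to apply the ABBV localization theorem (Theorem \ref{t21}) to powers of the equivariant first Chern class and then finish with a Vandermonde argument. Let $x$ denote the generator of $H^*(\mathbb{CP}^\infty;\mathbb{Z})$. Since the fixed point set is discrete, at each fixed point $p$ the normal bundle is the full tangent space $T_pM$, whose equivariant Euler class is $e_{S^1}(N_p) = \left(\prod_{i=1}^n w_{p,i}\right) x^n$. Moreover, as recalled just before the statement, the equivariant first Chern class $c_1^{S^1}$ restricts at $p$ to $\left(\sum_{i=1}^n w_{p,i}\right) x$; this identification of the Chern class map with the sum of weights is the input that makes the whole computation run.

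First I would apply localization to $\alpha = (c_1^{S^1})^j$ for $0 \le j \le n-1$. Then $\alpha|_p = \left(\sum_i w_{p,i}\right)^j x^j$, and the localization formula gives
\[
\int_M (c_1^{S^1})^j = \sum_{p \in M^{S^1}} \frac{\left(\sum_{i=1}^n w_{p,i}\right)^j}{\prod_{i=1}^n w_{p,i}}\, x^{j-n}.
\]
The left-hand side lies in $H^{2j-2n}(\mathbb{CP}^\infty;\mathbb{Q})$, which vanishes for degree reasons whenever $j < n$; since $x^{j-n}$ is a nonzero element of the field of fractions of $H^*(\mathbb{CP}^\infty;\mathbb{Q})$, this forces $\sum_p \left(\sum_i w_{p,i}\right)^j / \prod_i w_{p,i} = 0$ for each $j$ with $0 \le j \le n-1$.

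Next I would group the fixed points by the value $k = \sum_i w_{p,i}$ and set $A_k = \sum_{p:\,\sum_i w_{p,i} = k} 1/\prod_i w_{p,i}$, which is exactly the quantity we wish to show vanishes. Writing $S$ for the finite set of values taken by the weight sums, with $|S| = m \le n$ by hypothesis, the relations above read $\sum_{k \in S} k^j A_k = 0$ for $j = 0, 1, \ldots, n-1$. Restricting to $j = 0, \ldots, m-1$ produces an $m \times m$ homogeneous linear system in the unknowns $(A_k)_{k \in S}$ whose coefficient matrix is the Vandermonde matrix $(k^j)$ built from the $m$ distinct values of $S$. As these values are distinct, the Vandermonde determinant is nonzero, so $A_k = 0$ for every $k \in S$; for $k \notin S$ the defining sum is empty, so $A_k = 0$ trivially, establishing the claim for all $k \in \mathbb{Z}$.

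The argument is essentially forced once the right classes are chosen, so there is no single hard obstacle. The two points requiring care are the degree bookkeeping in the localization formula—ensuring the left-hand side genuinely vanishes, which crucially fails at $j = n$, where it becomes the honest Chern number $\int_M c_1^n$—and checking that the number of usable equations, namely $n$, is at least the number $m$ of distinct weight sums, so that the Vandermonde system is square and nondegenerate. It is precisely the hypothesis $m \le n$ that guarantees this.
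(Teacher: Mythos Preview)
The paper does not give its own proof of this lemma; it is quoted from \cite{PT} with the remark that the symplectic argument there ``naturally extends to a circle action on an almost complex manifold.'' Your proposal is correct and is precisely that argument: localize powers of the equivariant first Chern class for $j=0,\dots,n-1$, use the degree drop to get $n$ linear relations among the $A_k$, and invert the Vandermonde system. There is nothing to compare, since your proof \emph{is} the one the paper is citing.
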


To prove Theorem \ref{t11}, we need the classification results for the cases of few fixed points.

\begin{theorem} \cite{J3} \label{t211}
Let the circle act on a compact almost complex manifold $M$.
\begin{enumerate}
\item If there is exactly one fixed point, $M$ is the point.
\item If there are exactly two fixed points, either $M$ is the 2-sphere, or $\dim M=6$ and the weights at the fixed points are $\{-a-b,a,b\}$ and $\{-a,-b,a+b\}$ for some positive integers $a$ and $b$.
\item If there are exactly three fixed points, $\dim M=4$ and the weights at the fixed points are $\{a+b,a\}$, $\{-a,b\}$, $\{-b,-a-b\}$ for some positive integers $a$ and $b$.
\end{enumerate}
\end{theorem}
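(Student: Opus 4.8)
The plan is to read off the distribution of the numbers $N_i$ of fixed points with exactly $i$ negative weights from the two structural facts already available, and only afterwards to pin down the dimension and the individual weights. By Theorem \ref{t22} we have $N_i = N_{n-i}$ for all $i$, so the multiset of negative-weight counts is symmetric under $i \mapsto n-i$; and by the lemma producing two consecutive nonzero indices (Lemma \ref{l29}) there is some $i$ with $N_i \neq 0 \neq N_{i+1}$ whenever $\dim M > 0$. For part (1) these two facts already suffice: with a single fixed point exactly one $N_i$ equals $1$ and the rest vanish, so no two consecutive $N_i$ can be nonzero; hence $\dim M = 0$ and $M$ is a point.

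For part (2), with two fixed points the symmetry $N_i = N_{n-i}$ together with the consecutive-index lemma forces $N_i = N_{i+1} = 1$ and $\{i, i+1\} = \{n-i, n-i-1\}$, so $n = 2i+1$ is odd and the two fixed points $p, q$ carry $i$ and $n-i = i+1$ negative weights. If $i = 0$ then $n = 1$ and $M$ is a $2$-sphere with a rotation. Assume $i \geq 1$, so $n = 2i+1 \geq 3$. Using the loopless multigraph of Lemma \ref{l28}, every edge joins $p$ and $q$, giving $i+1$ edges $p \to q$ and $i$ edges $q \to p$; thus the weights at $p$ are $\{a_1,\dots,a_{i+1},-c_1,\dots,-c_i\}$ and those at $q$ are $\{c_1,\dots,c_i,-a_1,\dots,-a_{i+1}\}$ with all $a_k, c_l$ positive. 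Since $n \geq 2$, the set of weight-sums has at most $n$ elements, so the Pelayo--Tolman lemma (Lemma \ref{l29}) applies; combined with Corollary \ref{c24} it forces the weight-sum at each fixed point to vanish (a single fixed point cannot form its own class, as $1/\prod_j w_{p,j} \neq 0$), i.e. $\sum_k a_k = \sum_l c_l$. The crux is then to show $i = 1$, in which case $n = 3$, $c_1 = a_1 + a_2$, and writing $a = a_1, b = a_2$ gives exactly the weights $\{-a-b,a,b\}$ and $\{-a,-b,a+b\}$.

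For part (3), the same symmetry argument gives: $n$ must be even (an odd $n$ pairs all indices off, forcing an even number of fixed points), $N_{n/2}$ is odd and hence $1$ (the value $3$ is excluded by the consecutive-index lemma), and the remaining pair satisfies $N_m = N_{n-m} = 1$; the consecutive-index requirement then pins $m = n/2 - 1$, so the three fixed points carry $n/2 - 1$, $n/2$, $n/2 + 1$ negative weights. Here the weight-sums are generically three distinct values exceeding $n$, so the Pelayo--Tolman lemma need not apply; instead one organizes the weights through the loopless multigraph of Lemma \ref{l28}, in which the smallest-weight edges raise the negative-weight count by exactly one, and uses Lemma \ref{l24} at the smallest positive weight together with Corollary \ref{c24}, following the symplectic three-fixed-point analysis of \cite{J1}. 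The goal is to force $n = 2$, after which the three negative-weight counts are $0, 1, 2$ and the weights are determined to be $\{a+b,a\}$, $\{-a,b\}$, $\{-b,-a-b\}$, matching a linear action on $\mathbb{CP}^2$.

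The routine part is the bookkeeping of the $N_i$ and the edge structure; the genuine obstacle in (2) and (3) is the dimension bound -- ruling out $n \geq 5$ for two fixed points and $n \geq 4$ for three fixed points. This is where the argument departs from pure combinatorics: one must exploit the full rational-function identity of Theorem \ref{t22} (equivalently the ABBV identity of Theorem \ref{t21} applied to the $\chi_y$-genus), analyzing how the two or three local contributions $\prod_j (1 + y\, t^{w_{p,j}}) / \prod_j (1 - t^{w_{p,j}})$ can cancel into a $t$-independent polynomial. Following Kosniowski \cite{K1} for (2) and \cite{J1} for (3), one extracts from this identity enough divisibility and extremality constraints on the weights to collapse all higher-dimensional possibilities, which is the technical heart of the proof.
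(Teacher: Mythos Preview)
This theorem is quoted from \cite{J3} as a background result; the present paper does not supply its own proof, so there is nothing here to compare your argument against line by line. That said, your outline is accurate in its broad strokes and matches what the paper reports about the original proofs: part (2) follows Kosniowski \cite{K1} and part (3) is obtained by adapting the symplectic three-fixed-point analysis of \cite{J1}.

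Your bookkeeping with the $N_i$ is correct in all three parts, and the reduction in (2) to a zero weight-sum via Lemma~\ref{l29} (the Pelayo--Tolman lemma) together with Corollary~\ref{c24} is valid for $n\geq 2$. Where your write-up stops being a proof and becomes a sketch is exactly where you say it does: the dimension bounds (ruling out $n\geq 5$ in (2) and $n\geq 4$ in (3)) require the genuine analytic input from the rigidity identity in Theorem~\ref{t22}, and you defer this to \cite{K1} and \cite{J1} rather than carrying it out. Since the paper itself only cites \cite{J3} for this theorem, that is an appropriate level of detail here; just be aware that what you have written is an outline, not a self-contained proof, and the substantive content lives in those references.

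One minor remark: the paper has two lemmas sharing the label \texttt{l29} (the consecutive-$N_i$ lemma and the Pelayo--Tolman lemma), and you invoke both under the same reference. Your prose disambiguates them, but if you intend this to compile cleanly you should distinguish the two explicitly.
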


\begin{theorem} \cite{J4} \label{t212}
Let the circle act on a compact almost complex manifold $M$ with 4 fixed points.
\begin{enumerate}
\item If $\dim M=2$, $M$ is a disjoint union of two $S^2$'s, each of which is equipped with a rotation.
\item If $\dim M=4$, the weights at the fixed points are $\{a,b\}$, $\{-a,b\}$, $\{-b,c\}$, and $\{-b,-c\}$ for some positive integers $a,b$, and $c$ such that either $a \equiv c \mod b$ or $a \equiv -c \mod b$.
\end{enumerate}
\end{theorem}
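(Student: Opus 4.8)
The plan is to treat the two dimensions separately, the four-dimensional case carrying essentially all of the difficulty.

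For part (1), since $\dim M = 2$ each fixed point carries a single weight, and the number of isolated fixed points equals the Euler characteristic, so $\chi(M)=\chi(M^{S^1})=4$. As $M$ is almost complex it is an oriented surface, and every connected component satisfies $\chi \le 2$; hence $M$ is disconnected. On any component the action cannot be trivial, for otherwise that entire two-dimensional component would be fixed, contradicting discreteness of $M^{S^1}$. A nontrivial circle action on a compact connected oriented surface with isolated fixed points forces the surface to be $S^2$ carrying a rotation with exactly two poles; distributing the four fixed points two per component, $M$ is a disjoint union of two such rotated spheres. (Equivalently, one reads off $N_0=N_1=2$ from Theorem \ref{t22} with $n=1$ and applies Proposition \ref{p23}.)

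For part (2), write $N_i$ for the number of fixed points with exactly $i$ negative weights, so $N_0+N_1+N_2=4$. First I would pin down the distribution. Theorem \ref{t22} gives $N_i=N_{2-i}$, hence $N_0=N_2$ and $2N_0+N_1=4$, leaving $(N_0,N_1,N_2)\in\{(0,4,0),(1,2,1),(2,0,2)\}$. Both extremes are excluded by the lemma guaranteeing an index $i$ with $N_i\neq 0$ and $N_{i+1}\neq 0$ (Lemma \ref{l29}), so $(N_0,N_1,N_2)=(1,2,1)$. Thus there is one fixed point $P$ with both weights positive, one fixed point $Q$ with both weights negative, and two fixed points $R,S$ each with one positive and one negative weight. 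In any directed multigraph describing $M$ this fixes the degrees: $P$ has out-degree $2$ and in-degree $0$, $Q$ has in-degree $2$ and out-degree $0$, and $R,S$ each have in- and out-degree $1$, for four edges in total.

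Next I would invoke Lemma \ref{l28} to fix a loop-free describing multigraph and enumerate the admissible edge configurations on these four vertices, using property (1) as the main pruning tool. An edge whose label is at most the second smallest positive weight must raise the number of negative weights by exactly one; this immediately kills any two-cycle between $R$ and $S$ (the smallest weight would cross a zero jump) and forbids any small-labelled edge $P\to Q$ (a jump of two). The surviving configurations are the path graphs through $R$ and $S$. Combining their edge labels with Proposition \ref{p23}, so that weights occur in $\pm$ pairs, and with Lemma \ref{l24}, then expresses all eight weights in terms of the positive weights at $P$ and at $R,S$ and forces one weight to repeat; calling the repeated weight $b$ and the two remaining positive weights $a$ and $c$ yields the sets $\{a,b\}$, $\{-a,b\}$, $\{-b,c\}$, $\{-b,-c\}$.

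Finally, for the congruence I would apply property (2) of Lemma \ref{l28} to the edges whose label exceeds the second smallest positive weight. Reducing the two weight sets at the endpoints of such an edge modulo its label forces the remaining weight at each endpoint to agree up to a sign, which is precisely $a\equiv c$ or $a\equiv -c \pmod b$; the sign is dictated by which admissible pairing of the label-$b$ edges occurs. The hard part will be the bookkeeping in these last two paragraphs: the relative ordering of $a,b,c$ determines which edges fall under property (1) versus property (2), so several orderings must be run through, and those residual cases in which the graph properties alone are inconclusive should be closed by feeding the weights back into the localization identities of Theorem \ref{t22} and verifying that only the claimed congruences keep $\chi^0,\chi^1,\chi^2$ equal to the required integers $1,-2,1$.
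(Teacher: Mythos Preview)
This theorem is not proved in the present paper at all: it is quoted in Section~\ref{s2} as a background result from \cite{J4}, with no argument given, so there is nothing here to compare your proposal against.

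That said, your outline is a reasonable strategy and correctly identifies the coarse structure. The $\chi_y$-count via Theorem~\ref{t22} together with Lemma~\ref{l29} does force $(N_0,N_1,N_2)=(1,2,1)$, and a multigraph/weight-pairing argument is the natural way to extract the weight sets and the congruence. Two caveats. First, the final two paragraphs are a plan rather than a proof: you correctly anticipate that the ordering of $a,b,c$ governs which edges fall under property~(1) versus property~(2) of Lemma~\ref{l28}, but none of that case analysis is actually carried out, and the congruence $a\equiv\pm c\pmod b$ will not drop out until it is; in particular, when $b$ is not the largest weight, property~(2) does not apply to the label-$b$ edges and you must instead work with the isotropy submanifold $M^{\mathbb{Z}_b}$ or with direct localization identities. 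Second, both Lemma~\ref{l28} and Theorem~\ref{t212} are imported from the same source \cite{J4}, so if you intend this as a self-contained derivation you should verify that the proof of Lemma~\ref{l28} there does not already depend on the four-dimensional classification you are trying to recover.
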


\begin{figure}
\begin{subfigure}[b][7cm][s]{.25\textwidth}
\centering
\vfill
\begin{tikzpicture}[state/.style ={circle, draw}]
\node[state] (a) {$p_1$};
\node[state] (b) [above right=of a] {$p_2$};
\node[state] (c) [above left=of b] {$p_3$};
\path (a) [->] edge node[right] {$a$} (b);
\path (b) [->] edge node[right] {$b$} (c);
\path (a) [->]edge node [right] {$a+b$} (c);
\end{tikzpicture}
\vfill
\caption{3 fixed points}\label{fig2-1}
\vspace{\baselineskip}
\end{subfigure}\qquad
\centering
\begin{subfigure}[b][7cm][s]{.30\textwidth}
\centering
\vfill
\begin{tikzpicture}[state/.style ={circle, draw}]
\node[state] (A) {$p_1$};
\node[state] (B) [above left=of A] {$p_2$};
\node[state] (C) [above right=of A] {$p_3$};
\node[state] (D) [above right=of B] {$p_4$};
\path (A) [->] edge node[right] {$a$} (B);
\path (A) [->] edge node [right] {$b$} (C);
\path (B) [->] edge node [right] {$b$} (D);
\path (C) [->] edge node [right] {$c$} (D);
\end{tikzpicture}
\vfill
\caption{$\dim M=4$, $|M^{S^1}|=4$, I}\label{fig2-2}
\vspace{\baselineskip}
\end{subfigure}\qquad
\begin{subfigure}[b][7cm][s]{.30\textwidth}
\centering
\vfill
\begin{tikzpicture}[state/.style ={circle, draw}]
\node[state] (A) {$p_1$};
\node[state] (B) [above left=of A] {$p_2$};
\node[state] (C) [above =of B] {$p_3$};
\node[state] (D) [above right=of C] {$p_4$};
\path (A) [->] edge node[right] {$a$} (B);
\path (A) [->] edge node [right] {$b$} (D);
\path (B) [->] edge node [right] {$b$} (C);
\path (C) [->] edge node [right] {$c$} (D);
\end{tikzpicture}
\vfill
\caption{$\dim M=4$, $|M^{S^1}|=4$, II}\label{fig2-3}
\end{subfigure}
\caption{Multigraphs in Theorem \ref{t21}.}\label{fig2}
\end{figure}
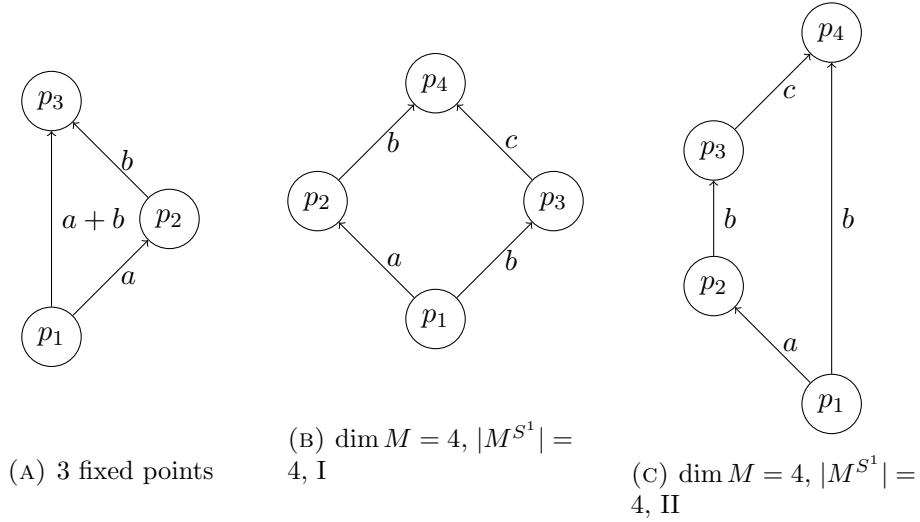

Consider a circle action on a compact almost complex manifold $M$. If there are 3 fixed points, Figure \ref{fig2-1} is the multigraph describing $M$. If $\dim M=4$ and there are 4 fixed points, Figures \ref{fig2-2} and \ref{fig2-3} are the multigraphs describing $M$, depending on whether $a \equiv c \mod b$ (Figure \ref{fig2-2}) or whether $a \equiv -c \mod b$ (Figure \ref{fig2-3}). We will need these multigraphs to prove Theorem \ref{t11}. We also need the following variant of Lemma 3.1 in \cite{TW}, which can be proved by quotienting out by the subgroup $\mathbb{Z}_w$ of $S^1$ that acts trivially.

\begin{lemma} \cite{TW}, \cite{J4} \label{l213}
Let the circle act on a $2n$-dimensional compact almost complex manifold with a non-empty discrete fixed point set. Suppose that every weight at each fixed point is either $+w$ or $-w$ for some positive integer $w$. Let $N_k$ denote the number of fixed points that have exactly $k$ negative weights. Then for all $k$ with $0 \leq k \leq n$, $N_k=N_0 \frac{n!}{k!(n-k)!}$. In particular, the total number of fixed points is equal to $N_0 \cdot 2^n$.
\end{lemma}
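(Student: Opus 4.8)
The plan is to reduce to the \emph{semifree} situation in which every weight equals $\pm 1$, and then to extract a one-step recursion for the numbers $N_k$ from Lemma \ref{l24}, which solves immediately to a binomial coefficient.

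I would first carry out the reduction suggested by the statement. Since every weight at every fixed point is $\pm w$, near a fixed point $p$ the local representation $(z_1,\dots,z_n)\mapsto(g^{\pm w}z_1,\dots,g^{\pm w}z_n)$ is trivial for $g$ in the subgroup $\mathbb{Z}_w\subset S^1$ of $w$-th roots of unity. Hence each $S^1$-fixed point lies in a $2n$-dimensional component of the fixed set $M^{\mathbb{Z}_w}$; being both open and closed, such a component exhausts the connected component of $M$ containing it, so $\mathbb{Z}_w$ acts trivially. The residual circle $S^1/\mathbb{Z}_w\cong S^1$ then acts with exactly the same fixed points, and under the identification each weight $\pm w$ becomes $\pm 1$. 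As the sign of every weight is preserved, the number of negative weights at each fixed point is unchanged, and hence so is every $N_k$. Thus I may assume $w=1$.

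With all weights equal to $\pm 1$, the number $1$ is the smallest positive weight, so Lemma \ref{l24} applies with $a=1$. A fixed point with exactly $k$ negative weights carries $n-k$ weights equal to $+1$ and $k$ weights equal to $-1$. Counting occurrences of the weight $+1$ among fixed points with exactly $i$ negative weights gives $(n-i)N_i$, while counting occurrences of the weight $-1$ among fixed points with exactly $i+1$ negative weights gives $(i+1)N_{i+1}$. Lemma \ref{l24} equates these counts, yielding
\[
(n-i)\,N_i=(i+1)\,N_{i+1},\qquad 0\le i\le n-1.
\]
A direct induction from $N_0$ then gives $N_k=\binom{n}{k}N_0$ for all $0\le k\le n$, which is also consistent with the symmetry $N_i=N_{n-i}$ coming from Theorem \ref{t22}. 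Summing over $k$ produces the total $\sum_{k=0}^{n}N_k=N_0\sum_{k=0}^{n}\binom{n}{k}=2^n N_0$.

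I expect the only genuinely delicate point to be the triviality of the $\mathbb{Z}_w$-action in the reduction, where one must invoke the local-to-global argument that a $2n$-dimensional component of $M^{\mathbb{Z}_w}$ fills out a connected component of $M$; the combinatorial core is then essentially immediate once Lemma \ref{l24} is available. In fact the reduction can be avoided entirely: since $w$ is the unique, and hence the smallest, positive weight, Lemma \ref{l24} applies directly with $a=w$ and yields the same recursion $(n-i)N_i=(i+1)N_{i+1}$ without any quotient. I would nonetheless present the quotient reduction to remain parallel with Lemma 3.1 of \cite{TW}, while pointing out this shortcut.
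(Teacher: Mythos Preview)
Your argument is correct and matches the paper's treatment: the paper does not give a full proof but only remarks that the lemma ``can be proved by quotienting out by the subgroup $\mathbb{Z}_w$ of $S^1$ that acts trivially,'' which is exactly your reduction, and the recursion $(n-i)N_i=(i+1)N_{i+1}$ via Lemma~\ref{l24} is precisely the intended combinatorial core. Your observation that the quotient step is unnecessary---since $w$ is already the smallest positive weight and Lemma~\ref{l24} applies directly---is a valid and slightly cleaner shortcut.
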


\section{Proof of Theorem \ref{t11}} \label{s3}

In this section, we prove our main result, Theorem \ref{t11}. The proof of Theorem \ref{t11} consists of a number of steps.

\begin{lemma} \label{l31}
Let the circle act on an 8-dimensional compact almost complex manifold $M$ with 4 fixed points. Then either $\chi_y(M)=1-y-y^3+y^4$, or $\chi_y(M)=-y+2y^2-y^3$.
\end{lemma}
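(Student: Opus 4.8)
The plan is to determine $\chi_y(M)$ purely from the combinatorics of the fixed-point data, since with only four isolated fixed points there is very little room. Write $n=4$ and let $N_i$ be the number of fixed points with exactly $i$ negative weights, so that $N_0+N_1+N_2+N_3+N_4=4$. First I would invoke Theorem \ref{t22}: in the form $(-1)^i N_i = (-1)^i N_{n-i}$ it yields the symmetry $N_0=N_4$ and $N_1=N_3$, and it identifies $\chi^i(M)=(-1)^i N_i$. Hence
\begin{equation*}
\chi_y(M)=N_0 - N_1 y + N_2 y^2 - N_1 y^3 + N_0 y^4,
\end{equation*}
and the fixed-point count collapses to the single Diophantine constraint $2N_0+2N_1+N_2=4$ in nonnegative integers.

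Next I would enumerate the solutions of $2N_0+2N_1+N_2=4$. There are exactly six, namely $(N_0,N_1,N_2,N_3,N_4)$ equal to $(0,0,4,0,0)$, $(0,1,2,1,0)$, $(0,2,0,2,0)$, $(1,0,2,0,1)$, $(1,1,0,1,1)$, and $(2,0,0,0,2)$, giving the candidate genera $4y^2$, $-y+2y^2-y^3$, $-2y-2y^3$, $1+2y^2+y^4$, $1-y-y^3+y^4$, and $2+2y^4$ respectively.

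Finally, to discard the spurious candidates I would apply Lemma \ref{l29} (the statement that there exists $i$ with $N_i\neq 0$ and $N_{i+1}\neq 0$), which is available because the fixed-point set is nonempty and discrete and $\dim M>0$. Inspecting the consecutive pairs $(N_i,N_{i+1})$ of each tuple, the four candidates $(0,0,4,0,0)$, $(0,2,0,2,0)$, $(1,0,2,0,1)$, and $(2,0,0,0,2)$ have a zero in every consecutive pair and are ruled out, whereas $(0,1,2,1,0)$ (with $N_1,N_2\neq 0$) and $(1,1,0,1,1)$ (with $N_0,N_1\neq 0$) survive. These yield precisely $\chi_y(M)=-y+2y^2-y^3$ and $\chi_y(M)=1-y-y^3+y^4$, as claimed.

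I do not expect a deep obstacle here; the argument is elementary once Theorem \ref{t22} and Lemma \ref{l29} are in hand. The only point requiring care is making the enumeration genuinely exhaustive and checking that Lemma \ref{l29} eliminates exactly the four unwanted tuples and no others. The substantive work of the paper then lies beyond this lemma, in excluding the first of the two surviving genera so as to pin down $\chi_y(M)=-y+2y^2-y^3$.
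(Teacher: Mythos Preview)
Your proof is correct and follows essentially the same approach as the paper's: both invoke Theorem~\ref{t22} for the symmetry $N_0=N_4$, $N_1=N_3$ and the identification $\chi^i(M)=(-1)^iN_i$, then apply Lemma~\ref{l29} together with the count $\sum N_i=4$ to isolate the two surviving tuples. Your version is slightly more explicit in listing all six solutions of $2N_0+2N_1+N_2=4$ before eliminating four of them, whereas the paper passes directly to the two survivors, but the logic is identical.
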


\begin{proof}
For each $i$, let $N_i$ be the number of fixed points that have exactly $i$ negative weights. By Theorem \ref{t21}, $N_0=N_4$ and $N_1=N_3$. By Lemma \ref{l29}, there exists $i \in \{0,1,2,3\}$ such that $N_i \neq 0$ and $N_{i+1} \neq 0$. Since $N_0+N_1+N_2+N_3+N_4=4$ and $N_i \geq 0$ for $i=0,1,\cdots,4$, these imply that either
\begin{enumerate}
\item $N_0=1$, $N_1=1$, $N_2=0$, $N_3=1$, and $N_4=1$, or
\item $N_0=0$, $N_1=1$, $N_2=2$, $N_3=1$, and $N_4=0$.
\end{enumerate}
Since $\chi_y(M)=\sum_{i=0}^4 \chi^i(M) \cdot y^i$ and $\chi^i(M)=(-1)^i N_i$ for each $i$, the lemma follows. \end{proof}

Applying Lemma \ref{l29}, we obtain the following lemma.

\begin{lemma} \label{l32}
Let $n \geq 4$. Let the circle act on a $2n$-dimensional compact almost complex manifold $M$ with 4 fixed points. Then one of the following holds:
\begin{enumerate}[(1)]
\item $\displaystyle \sum_{i=1}^n w_{p,i}=0$ for all $p \in M^{S^1}$ and $\displaystyle \sum_{p \in M^{S^1}} \frac{1}{\prod_{i=1}^n w_{p,i}}=0$.
\item We can divide the fixed points into 2 pairs $(p_1,p_2)$ and $(p_3,p_4)$ such that
\begin{enumerate}[(a)]
\item $\displaystyle  \prod_{i=1}^n w_{p_1,i}=-\prod_{i=1}^n w_{p_2,i}$,
\item $\displaystyle  \prod_{i=1}^n w_{p_3,i}=-\prod_{i=1}^n w_{p_4,i}$, and
\item $\displaystyle  \sum_{i=1}^n w_{p_1,i}=\sum_{i=1}^n w_{p_2,i}=-\sum_{i=1}^n w_{p_3,i}=-\sum_{i=1}^n w_{p_4,i}$.
\end{enumerate}
\end{enumerate}
\end{lemma}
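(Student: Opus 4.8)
The plan is to apply the Pelayo--Tolman type result (Lemma \ref{l29}) directly to the four sums of weights, since with only four fixed points its hypothesis is automatic. Write $s_p=\sum_{i=1}^n w_{p,i}$ and $\pi_p=\prod_{i=1}^n w_{p,i}$ for a fixed point $p$. Because $M$ has exactly four fixed points, the set $\{s_p \mid p\in M^{S^1}\}$ has at most four elements, and as $n\geq 4$ this is at most $n$; hence Lemma \ref{l29} applies and gives
\begin{equation*}
\sum_{p\in M^{S^1},\, s_p=k}\frac{1}{\pi_p}=0 \qquad\text{for every }k\in\mathbb{Z}.
\end{equation*}
This is the only nontrivial input; everything else is bookkeeping on the multiset $\{s_{p_1},s_{p_2},s_{p_3},s_{p_4}\}$, constrained also by Corollary \ref{c24}, which states $s_{p_1}+s_{p_2}+s_{p_3}+s_{p_4}=0$.

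The first step I would record is that every value actually attained by $s_p$ must be attained by at least two fixed points. Indeed, if some value $k$ were attained by exactly one point $p$, the displayed identity would read $1/\pi_p=0$, which is impossible because $\pi_p$ is a product of finitely many nonzero integers and is therefore a nonzero rational. With four points total, this \emph{at least two} constraint rules out the patterns $3+1$, $2+1+1$, and $1+1+1+1$, while three or more distinct values would already require at least six points. Hence the distinct values among $s_{p_1},\dots,s_{p_4}$ form either a single value of multiplicity four, or two distinct values each of multiplicity two.

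In the first pattern all four sums equal a common value $k$, and Corollary \ref{c24} gives $4k=0$, so $s_p=0$ for every $p$; taking $k=0$ in the displayed identity yields $\sum_{p}1/\pi_p=0$, which is exactly conclusion (1). In the second pattern, after relabeling we have $s_{p_1}=s_{p_2}=a$ and $s_{p_3}=s_{p_4}=b$ with $a\neq b$, and Corollary \ref{c24} gives $2a+2b=0$, hence $b=-a$; this is precisely (2)(c). Taking $k=a$ in the displayed identity gives $1/\pi_{p_1}+1/\pi_{p_2}=0$, that is $\pi_{p_1}=-\pi_{p_2}$, which is (2)(a), and taking $k=-a$ gives $\pi_{p_3}=-\pi_{p_4}$, which is (2)(b).

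Since the argument is short, I do not expect a serious obstacle once Lemma \ref{l29} is invoked; the one point requiring care is verifying that its hypothesis holds, and this is exactly where the standing assumption $n\geq 4$ is used, ensuring that four distinct weight-sums cannot exceed $n$. The remaining work is the finite enumeration of admissible multiplicity patterns and reading off (a)--(c), which I would present as a single short case distinction rather than a computation.
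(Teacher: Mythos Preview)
Your proof is correct and follows essentially the same approach as the paper: both invoke Lemma~\ref{l29} (using $n\geq 4$ to satisfy its hypothesis), observe that each attained value of the weight-sum must occur at least twice, split into the cases ``all four equal'' versus ``two pairs,'' and finish with Corollary~\ref{c24}. Your enumeration of multiplicity patterns is a slightly more systematic packaging of the same case distinction the paper carries out sequentially, but there is no substantive difference.
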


\begin{proof} Since $\dim M =2n \geq 8$ and the set $\{\sum_{i=1}^n w_{p,i} \ | \ p \in M^{S^1}\}$ contains at most 4 elements, by Lemma \ref{l29},
\begin{center}
$\displaystyle \sum_{p \in M^{S^1},\ \sum_{i=1}^n w_{p,i}=k} \frac{1}{\prod_{i=1}^n w_{p,i}}=0$ for all $k \in \mathbb{Z}$.
\end{center}
Pick a fixed point $p_1$. Let $a=\sum_{i=1}^n w_{p_1,i}$. Since 
\begin{center}
$\displaystyle \sum_{p \in M^{S^1},\ \sum_{i=1}^n w_{p,i}=a} \frac{1}{\prod_{i=1}^n w_{p,i}}=0$, 
\end{center}
there must exist another fixed point $p_2$ such that $\sum_{i=1}^n w_{p_2,i}=a$.

Suppose that another fixed point $p_3$ also satisfies $\sum_{i=1}^n w_{p_3,i}=a$. Let $p_4$ be the remaining fixed point. If $\sum_{i=1}^n w_{p_4,i} =b \neq a$, since $p_4$ is the only fixed point such that $\sum_{i=1}^n w_{p_4,i}=b$, we cannot have 
\begin{center}
$\displaystyle \sum_{p \in M^{S^1}, \sum_{i=1}^n w_{p,i}=b} \frac{1}{\prod_{i=1}^n w_{p,i}}=0$.
\end{center}
Therefore, we must have that $\sum_{i=1}^n w_{p_4,i}=a$. This implies that
\begin{center}
$\displaystyle \sum_{p \in M^{S^1},\ \sum_{i=1}^n w_{p,i}=a} \frac{1}{\prod_{i=1}^n w_{p,i}}=\sum_{p \in M^{S^1}} \frac{1}{\prod_{i=1}^n w_{p,i}}=0$.
\end{center}
On the other hand, by Corollary \ref{c24},
\begin{center}
$\displaystyle \sum_{p \in M^{S^1}} \sum_{i=1}^n w_{p,i}=a+a+a+a=0$.
\end{center}
Therefore, $\sum_{i=1}^n w_{p,i}=0$ for all $p \in M^{S^1}$. This is Case (1) of the lemma.

Next, suppose that another fixed point $p_3$ satisfies $\sum_{i=1}^n w_{p_3,i} \neq a$. Let $b=\sum_{i=1}^n w_{p_3,i}$. If $p_3$ is the only fixed point such that $\sum_{i=1}^n w_{p_3,i}=b$, then we have 
\begin{center}
$\displaystyle 0=\sum_{p \in M^{S^1}, \sum_{i=1}^n w_{p,i}=b} \frac{1}{\prod_{i=1}^n w_{p,i}}=\frac{1}{\prod_{i=1}^n w_{p_3,i}} \neq 0$, 
\end{center}
which is a contradiction. Therefore, the remaining fixed point $p_4$ satisfies $\sum_{i=1}^n w_{p_4,i}=b$. Since $\sum_{i=1}^n w_{p_1,i}=\sum_{i=1}^n w_{p_2,i}=a$,
\begin{center}
$\displaystyle \sum_{p \in M^{S^1}, \sum_{i=1}^n w_{p,i}=a} \frac{1}{\prod_{i=1}^n w_{p,i}}=\frac{1}{\prod_{i=1}^n w_{p_1,i}}+\frac{1}{\prod_{i=1}^n w_{p_2,i}}=0$.
\end{center}
Therefore, we have $\prod_{i=1}^n w_{p_1,i}=-\prod_{i=1}^n w_{p_2,i}$. Similarly, since $\sum_{i=1}^n w_{p_3,i}=\sum_{i=1}^n w_{p_4,i}=b$, it follows that $\prod_{i=1}^n w_{p_3,i}=-\prod_{i=1}^n w_{p_4,i}$. Finally, by Corollary \ref{c24},
\begin{center}
$\displaystyle \sum_{p \in M^{S^1}} \sum_{i=1}^n w_{p,i}=a+a+b+b=0$.
\end{center}
Hence $b=-a$. This is Case (2) of the lemma. \end{proof}

\begin{pro} \label{p33}
Let the circle act on an 8-dimensional compact almost complex manifold $M$ with 4 fixed points. Assume that the Hirzebruch $\chi_y$-genus of $M$ is $\chi_y(M)=1-y-y^3+y^4$. Then there exist positive integers $a_1,a_2,a_3,a_4,b_2,b_3,b_4,c_1$ so that exactly one of the figures in Figure \ref{fig1} occurs as a labeled directed multigraph describing $M$ that satisfies the conditions in Lemma \ref{l28}. In each case, the following inequality holds.
\begin{center}
$\max\{a_1,c_1\}<\min\{a_2,a_3,a_4,b_2,b_3,b_4\}$. 
\end{center}
Moreover, we can divide the fixed points into 2 pairs $(p_0,p_1)$ and $(p_3,p_4)$ so that
\begin{enumerate}
\item $\prod_{i=1}^4 w_{p_0,i}=-\prod_{i=1}^4 w_{p_1,i}$,
\item $\prod_{i=1}^4 w_{p_3,i}=-\prod_{i=1}^4 w_{p_4,i}$, and
\item $\sum_{i=1}^4 w_{p_0,i}=\sum_{i=1}^4 w_{p_1,i}=-\sum_{i=1}^4 w_{p_3,i}=-\sum_{i=1}^4 w_{p_4,i}$.
\end{enumerate}
\end{pro}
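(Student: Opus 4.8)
The plan is to combine the loopless multigraph of Lemma~\ref{l28} with the sum--product dichotomy of Lemma~\ref{l32}, using the vanishing of $N_2$ as the main structural lever.

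First I would record the fixed-point data. By Lemma~\ref{l31}, the hypothesis $\chi_y(M)=1-y-y^3+y^4$ forces $N_0=N_1=N_3=N_4=1$ and $N_2=0$, so I may name the four fixed points $p_0,p_1,p_3,p_4$ according to their numbers of negative weights $0,1,3,4$. Fix a loopless multigraph describing $M$ as in Lemma~\ref{l28}. Since an edge $e$ contributes the weight $w(e)>0$ at $i(e)$ and $-w(e)<0$ at $t(e)$, the in-degree of a vertex equals its number of negative weights; thus $p_0$ is a source (out-degree $4$), $p_4$ is a sink (in-degree $4$), while $p_1$ has out-degree $3$ and in-degree $1$, and $p_3$ has out-degree $1$ and in-degree $3$.

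The crucial observation is that, because there is no fixed point with exactly two negative weights, condition~(1) of Lemma~\ref{l28} permits an edge of weight at most the second smallest positive weight (a \emph{small} edge) only along a $0\to 1$ or a $3\to 4$ transition in the negative-weight count, that is, only as $p_0\to p_1$ or $p_3\to p_4$. Next I would pin down these two distinguished edges. The unique out-edge of $p_3$ ends at $p_1$ or $p_4$, and the unique in-edge of $p_1$ issues from $p_0$ or $p_3$; were the out-edge of $p_3$ the arrow $p_3\to p_1$, it would also be the in-edge of $p_1$, leaving no $0\to 1$ or $3\to 4$ edge at all, so no small edge would exist, contradicting that the edge carrying the smallest positive weight is small and hence monotone by Lemma~\ref{l28}(1). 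Hence $p_3\to p_4$ (label $c_1$) and $p_0\to p_1$ (label $a_1$) are both edges. The remaining three out-edges $a_2,a_3,a_4$ of $p_0$ and three out-edges $b_2,b_3,b_4$ of $p_1$ must each terminate at $p_3$ or $p_4$, with exactly three of them reaching $p_3$; the four ways of distributing them produce the candidate multigraphs of Figure~\ref{fig1}, and I would then record which congruence constraints of Lemma~\ref{l28}(2) each case entails. Isolating exactly the admissible configurations and checking none is spurious is the step I expect to be the main obstacle.

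The inequality then falls out cheaply: each of $a_2,a_3,a_4,b_2,b_3,b_4$ labels an edge whose endpoints differ by $2$, $3$, or $4$ in negative-weight count, never by $1$, so by the contrapositive of Lemma~\ref{l28}(1) each of these six weights is strictly larger than the second smallest positive weight $a'$. As the eight positive weights of $M$ are exactly the eight edge labels, the two smallest among them must be $a_1$ and $c_1$, whence $\max\{a_1,c_1\}=a'<\min\{a_2,a_3,a_4,b_2,b_3,b_4\}$. Finally, for the pairing I would invoke Lemma~\ref{l32}: Case~(1) is impossible because $p_0$ has four positive weights, giving $\sum_{i=1}^4 w_{p_0,i}>0\neq 0$, so Case~(2) holds, yielding a partition into two pairs with opposite weight-sums $\pm a$ and opposite products within each pair. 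The inequality just proved gives $b_2,b_3,b_4>a_1$, hence $\sum_{i=1}^4 w_{p_1,i}=-a_1+b_2+b_3+b_4>0$, and symmetrically the three negative weights at $p_3$ each exceed $c_1$ in absolute value, so $\sum_{i=1}^4 w_{p_3,i}<0$. Thus $p_0,p_1$ are the two points of positive weight-sum and $p_3,p_4$ those of negative weight-sum, forcing the pairing $(p_0,p_1)$, $(p_3,p_4)$ together with the identities (1)--(3).
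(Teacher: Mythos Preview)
Your proof is correct and follows essentially the same route as the paper: identify the four fixed points by their negative-weight counts, use Lemma~\ref{l28}(1) together with $N_2=0$ to force the edges $p_0\to p_1$ and $p_3\to p_4$ and the inequality, enumerate the remaining edges by the parameter $m$ (edges from $p_0$ to $p_3$), and then apply Lemma~\ref{l32} with the sign information on the weight-sums to pin down the pairing. Your hesitation about ``isolating exactly the admissible configurations'' is misplaced---this is just the four values $m=0,1,2,3$ and is the easy step, not the obstacle; also, the congruence constraints of Lemma~\ref{l28}(2) play no role in this proposition (they enter only later in Theorem~\ref{t33}).
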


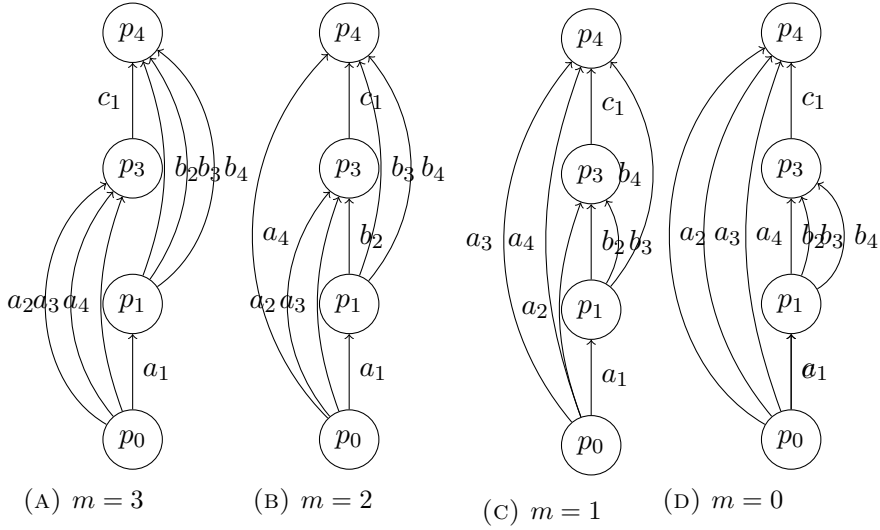
\begin{figure}
\centering
\begin{subfigure}[b][7cm][s]{.18\textwidth}
\centering
\vfill
\begin{tikzpicture}[state/.style ={circle, draw}]
\node[state] (a) {$p_0$};
\node[state] (b) [above=of a] {$p_1$};
\node[state] (c) [above=of b] {$p_3$};
\node[state] (d) [above=of c] {$p_4$};
\path (a) [->] edge node[right] {$a_1$} (b);
\path (c) [->] edge node[left] {$c_1$} (d);
\path (a) [->] [bend left =60] edge node[left] {$a_2$} (c);
\path (a) [->] [bend left =40] edge node [left] {$a_3$} (c);
\path (a) [->] [bend left =20] edge node [left] {$a_4$} (c);
\path (b) [->] [bend right =20] edge node [right] {$b_2$} (d);
\path (b) [->] [bend right =35] edge node [right] {$b_3$} (d);
\path (b) [->] [bend right =55] edge node [right] {$b_4$} (d);
\end{tikzpicture}
\vfill
\caption{$m=3$}\label{fig1-1}
\vspace{\baselineskip}
\end{subfigure}\qquad
\begin{subfigure}[b][7cm][s]{.18\textwidth}
\centering
\vfill
\begin{tikzpicture}[state/.style ={circle, draw}]
\node[state] (a) {$p_0$};
\node[state] (b) [above=of a] {$p_1$};
\node[state] (c) [above=of b] {$p_3$};
\node[state] (d) [above=of c] {$p_4$};
\path (a) [->] edge node[right] {$a_1$} (b);
\path (c) [->] edge node[right] {$c_1$} (d);
\path (a) [->] [bend left =40] edge node[left] {$a_2$} (c);
\path (a) [->] [bend left =20] edge node [left] {$a_3$} (c);
\path (a) [->] [bend left =45] edge node[right] {$a_4$} (d);
\path (b) [->] edge node [right] {$b_2$} (c);
\path (b) [->] [bend right =20] edge node [right] {$b_3$} (d);
\path (b) [->] [bend right =40]  edge node [right] {$b_4$} (d);
\end{tikzpicture}
\vfill
\caption{$m=2$}\label{fig1-2}
\vspace{\baselineskip}
\end{subfigure}\qquad
\begin{subfigure}[b][7cm][s]{.18\textwidth}
\centering
\vfill
\begin{tikzpicture}[state/.style ={circle, draw}]
\node[state] (a) {$p_0$};
\node[state] (b) [above=of a] {$p_1$};
\node[state] (c) [above=of b] {$p_3$};
\node[state] (d) [above=of c] {$p_4$};
\path (a) [->] edge node[right] {$a_1$} (b);
\path (c) [->] edge node[right] {$c_1$} (d);
\path (a) [->] [bend left =20] edge node [left] {$a_2$} (c);
\path (a) [->] [bend left =40]edge node [left] {$a_3$} (d);
\path (a) [->] [bend left =20]edge node [left] {$a_4$} (d);
\path (b) [->] edge node [right] {$b_2$} (c);
\path (b) [->] [bend right =30]  edge node [right] {$b_3$} (c);
\path (b) [->] [bend right =40]  edge node [left] {$b_4$} (d);
\end{tikzpicture}
\vfill
\caption{$m=1$}\label{fig1-3}
\end{subfigure}
\begin{subfigure}[b][7cm][s]{.18\textwidth}
\centering
\vfill
\begin{tikzpicture}[state/.style ={circle, draw}]
\node[state] (a) {$p_0$};
\node[state] (b) [above=of a] {$p_1$};
\node[state] (c) [above=of b] {$p_3$};
\node[state] (d) [above=of c] {$p_4$};
\path (a) [->] edge node[right] {$a_1$} (b);
\path (c) [->] edge node[right] {$c_1$} (d);
\path (a) [->] [bend left =60] edge node[right] {$a_2$} (d);
\path (a) [->] [bend left =40] edge node [right] {$a_3$} (d);
\path (a) [->] [bend left =20] edge node [right] {$a_4$} (d);
\path (a) [->] edge node [right] {$c$} (b);
\path (b) [->] [bend right=60] edge node [right] {$b_4$} (c);
\path (b) [->] [bend right =20]  edge node [right] {$b_3$} (c);
\path (b) [->] edge node [right] {$b_2$} (c);
\end{tikzpicture}
\vfill
\caption{$m=0$}\label{fig1-4}
\vspace{\baselineskip}
\end{subfigure}\qquad
\caption{Possible multigraphs}\label{fig1}
\end{figure}

\begin{proof} Since $\chi_y(M)=\sum_{i=0}^4 \chi^i(M) \cdot y^i=1-y-y^3+y^4$ and $\chi^i(M)=(-1)^i N_i$ for all $i$, where $N_i$ is the number of fixed points with exactly $i$ negative weights, for $i \in \{0,1,3,4\}$ there exsits exactly one fixed point $p_i$ that has exactly $i$ negative weights. Let $\{a_1,a_2,a_3,a_4\}$, $\{-b_1,b_2,b_3,b_4\}$, $\{c_1,-c_2,-c_3,-c_4\}$, $\{-d_1,-d_2,-d_3,-d_4\}$ be the weights at $p_0,p_1,p_3,p_4$, respectively, for some positive integers $a_i$, $b_i$, $c_i$, $d_i$, $1 \leq i \leq 4$. Without loss of generality, assume that $a_1 \leq a_2, a_3, a_4$ and $d_1 \leq d_2, d_3, d_4$.

We apply Lemma \ref{l28}; there exists a labeled directed multigraph $\Gamma$ discribing $M$ that satisfies the properties in Lemma \ref{l28}. Consider Property (1) in Lemma \ref{l28}. Property (1) in Lemma \ref{l28} states that $\Gamma$ has at least two edges $e$ such that $n_{i(e)}+1=n_{t(e)}$, where $n_p$ denotes the number of negative weights at $p$, and $w(e)$ is either the smallest positive weight or the second smallest positive weight. The manifold $M$ has no fixed point that has exactly two negative weights, $p_1$ has only one negative weight, and $p_3$ has only one positive weight. These imply that
\begin{enumerate}
\item There must be an edge from $p_0$ to $p_1$; $a_1$ is the label of the edge.
\item There must be an edge from $p_3$ to $p_4$; $c_1$ is the label of the edge.
\item The weights $\{a_1,c_1\}$ are the smallest positive weight and the second smallest positive weight.
\end{enumerate}
This means that $a_1=b_1$ and $c_1=d_1$. Property (1) of Lemma \ref{l28} also implies that 
\begin{center}
$\max\{a_1,c_1\} < \min\{a_2,a_3,a_4,b_2,b_3,b_4,c_2,c_3,c_4,d_2,d_3,d_4\}$, 
\end{center}
since all the other edges $e$ cannot satisfy $n_{i(e)}+1=n_{t(e)}$. This proves the second claim of the proposition.

Let $m$ be the number of edges from $p_0$ to $p_3$ in $\Gamma$. Completing the multigraph $\Gamma$, exactly one of the figures in Figure \ref{fig1} occurs as a multigraph $\Gamma$ describing $M$ that satisfies the properties in Lemma \ref{l28}. The cases $m=3,2,1,0$ correspond to Figures \ref{fig1-1}, \ref{fig1-2}, \ref{fig1-3}, \ref{fig1-4}, respectively. This proves the first claim of the proposition.

We apply Lemma \ref{l32}. Since $\sum_{i=1}^4 w_{p_0,i}>0$, Case (2) of Lemma \ref{l32} holds. Since $b_1 < b_2,b_3,b_4$, $\sum_{i=1}^4 w_{p_1,i}>0$. Similarly, since $c_1<c_2,c_3,c_4$, $\sum_{i=1}^4 w_{p_3,i}<0$. Therefore, Lemma \ref{l32} implies that
\begin{enumerate}
\item[(a)] $\prod_{i=1}^4 w_{p_0,i}=-\prod_{i=1}^4 w_{p_1,i}$,
\item[(b)] $\prod_{i=1}^4 w_{p_3,i}=-\prod_{i=1}^4 w_{p_4,i}$, and
\item[(c)] $\sum_{i=1}^4 w_{p_0,i}=\sum_{i=1}^4 w_{p_1,i}=-\sum_{i=1}^4 w_{p_3,i}=-\sum_{i=1}^4 w_{p_4,i}$.
\end{enumerate}
This proves the third claim of the proposition. \end{proof}

We are ready to determine the Hirzebruch $\chi_y$-genus of an 8-dimensional compact almost complex manifold, equipped with a circle action having 4 fixed points.

\begin{theorem} \label{t33}
Let the circle act on an 8-dimensional compact almost complex manifold $M$ with 4 fixed points. Then the Hirzebruch $\chi_y$-genus of $M$ is $\chi_y(M)=-y+2y^2-y^3$.
\end{theorem}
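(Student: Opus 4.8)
The plan is to reduce to a single case and then eliminate it. By Lemma \ref{l31} the Hirzebruch $\chi_y$-genus is either the desired $-y+2y^2-y^3$ or $1-y-y^3+y^4$, so it suffices to show that $\chi_y(M)=1-y-y^3+y^4$ is impossible. Assuming it holds, Proposition \ref{p33} applies: there are fixed points $p_0,p_1,p_3,p_4$ having $0,1,3,4$ negative weights, with weights $\{a_1,a_2,a_3,a_4\}$, $\{-a_1,b_2,b_3,b_4\}$, $\{c_1,-c_2,-c_3,-c_4\}$, $\{-c_1,-d_2,-d_3,-d_4\}$, and exactly one of the four multigraphs of Figure \ref{fig1} ($m=3,2,1,0$) describes $M$ and satisfies Lemma \ref{l28}. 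For each $m$ I would first read off from the edges which positive weight of one vertex is matched (as a $\pm$ pair) with which negative weight of another, thereby expressing $\{c_2,c_3,c_4\}$ and $\{d_2,d_3,d_4\}$ in terms of the $a_i,b_i$, and then substitute into the sum relation $\sum_i w_{p_0,i}=-\sum_i w_{p_3,i}$ and the product relations of Proposition \ref{p33}.

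For $m=3$ and $m=2$ this closes the case at once. When $m=3$ the edges give $\{c_2,c_3,c_4\}=\{a_2,a_3,a_4\}$, so the sum relation collapses to $a_1=-c_1$, impossible for positive $a_1,c_1$. When $m=2$ one reads $\{c_2,c_3,c_4\}=\{a_2,a_3,b_2\}$ and $\{d_2,d_3,d_4\}=\{a_4,b_3,b_4\}$; the sum relations give $b_2=a_1+a_4+c_1$, while dividing the two product relations $a_2a_3a_4=b_2b_3b_4$ and $a_2a_3b_2=a_4b_3b_4$ forces $a_4^2=b_2^2$, i.e. $a_4=b_2$, contradicting $b_2=a_1+a_4+c_1>a_4$.

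The cases $m=1$ and $m=0$ are the crux and, I expect, the main obstacle, because here the sum and product relations are mutually consistent rather than contradictory: they force $c_1=a_1$ and the symmetric configuration $p_4=-p_0$, $p_3=-p_1$ (the weights of $p_4$ are the negatives of those of $p_0$, similarly for $p_3,p_1$), together with $b_2b_3b_4=a_2a_3a_4$ and $b_2+b_3+b_4=a_2+a_3+a_4+2s$, where $s:=a_1=c_1$ and $s<a_2,a_3,a_4,b_2,b_3,b_4$ by Proposition \ref{p33}. To break the symmetry I would invoke the congruence condition Lemma \ref{l28}(2): across any long edge $e$ the weights at $i(e)$ and $t(e)$ agree modulo $w(e)$. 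Applying this to the edge carrying the \emph{largest} weight $W$ and reducing modulo $W$, the three remaining weights at that vertex form a multiset that must be invariant under $x\mapsto -x$ modulo $W$; having odd cardinality, one of them satisfies $2x\equiv 0\ (\mathrm{mod}\ W)$, which (using $s<$ all large weights) forces either $W=2s$ or some large weight to equal $s$. For $m=0$ this eliminates the case directly: the strictly-largest situation is immediately contradictory, and the degenerate situation $W=2s$ with a triple all equal to $2s$ is excluded either by maximality of $W$ or, on the relevant side, by AM--GM against $a_2+a_3+a_4=4s$ with $a_2a_3a_4=(2s)^3$.

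For $m=1$ the same mechanism works but requires splitting according to whether the maximal weight $W$ lies in $\{a_3,a_4\}$, equals the shared weight $a_2$ (recall $b_4=a_2$ here), or lies in $\{b_2,b_3\}$, and carefully handling ties at the maximum. In each subcase the negation-symmetry modulo $W$, combined with $s<$ all large weights and the relations $b_2+b_3=2s+a_3+a_4$ and $b_2b_3=a_3a_4$, yields a contradiction; for instance, when $W=a_2$ the congruence forces $a_2=s+a_3+a_4$ while the product relation forces $a_2=a_3+a_4$, which is absurd, and in the remaining subcases the bound $b_2+b_3\le 2W$ clashes with $b_2+b_3=2s+a_3+a_4$. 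This bookkeeping over subcases and ties is the tedious heart of the argument. Once all four multigraphs are ruled out, $\chi_y(M)=1-y-y^3+y^4$ is impossible, leaving $\chi_y(M)=-y+2y^2-y^3$, as claimed.
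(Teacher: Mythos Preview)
Your overall strategy—reduce via Lemma~\ref{l31} and Proposition~\ref{p33}, then eliminate each multigraph—matches the paper, and your treatment of $m=3$ and $m=2$ is identical to it. The divergence is in $m=1$ and $m=0$. The paper does \emph{not} push the congruence Lemma~\ref{l28}(2) through those cases: for $m=1$ it derives $a_2=b_4$, $a_3a_4=b_2b_3$, $a_1=c_1$, $2a_1+a_3+a_4=b_2+b_3$ exactly as you do, but then finishes in one stroke by localizing the equivariant class $c_2(M)$ (Theorem~\ref{t21}) to obtain $a_1a_4+a_2a_3+a_1b_2+a_1b_3=0$, a manifest impossibility. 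For $m=0$ the paper first uses isotropy submanifolds $M^{\mathbb Z_w}$ together with Lemma~\ref{l213} and Theorems~\ref{t211}--\ref{t212} to force $a_2,a_3<a_4$ strictly, and only then runs the congruence argument on the $a_4$-edge.

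Your alternative (congruences plus the sum/product relations alone) can be made to work, but the sketch as written has real gaps. For $m=0$, the claim that ``the strictly-largest situation is immediately contradictory'' is false: the pairing $2a_2=a_4$, $s+a_3=a_4$ survives the $a_4$-edge congruence and all the $a$-side constraints; to kill it you must invoke a \emph{second} congruence on a $b$-edge (Lemma~\ref{l28}(2) applies to every edge with label exceeding $s$), which you do not mention. For $m=1$ with $W\in\{a_3,a_4\}$, the inequality $b_2+b_3\le 2W$ by itself does not clash with $b_2+b_3=2s+a_3+a_4$; what actually works is that $b_2b_3=a_3a_4$ together with $b_2,b_3\le W$ forces $b_2,b_3\in[\min(a_3,a_4),\max(a_3,a_4)]$, whence $b_2+b_3\le a_3+a_4<2s+a_3+a_4$. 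Your $W=a_2$ subcase is also misstated (the congruence gives $\{a_3,a_4\}\equiv\{-b_2,-b_3\}\bmod a_2$, hence via the product relation $a_2=a_3+a_4$, which together with the sum relation yields $2s=0$; I do not see where $a_2=s+a_3+a_4$ comes from). So the route is viable but considerably more laborious than the paper's, and several of the shortcuts you claim do not hold as stated.
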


\begin{proof} By Lemma \ref{l31}, either $\chi_y(M)=1-y-y^3+y^4$, or $\chi_y(M)=-y+2y^2-y^3$. To prove the theorem, assume on the contrary that $\chi_y(M)=1-y-y^3+y^4$. By Proposition \ref{p33}, exactly one of the figures in Figure \ref{fig1} occurs as a labeled directed multigraph $\Gamma$ describing $M$ that satisfies the conditions in Lemma \ref{l28}.

Suppose that Figure \ref{fig1-1} is the multigraph $\Gamma$ describing $M$ that satisfies the conditions in Lemma \ref{l28}. By Property (3) of Proposition \ref{p33}, we have $a_1+a_2+a_3+a_4=\sum_{i=1}^4 w_{p_0,i} = -\sum_{i=1}^4 w_{p_3,i}=-c_1+a_2+a_3+a_4$, but this cannot hold. Therefore, Figure \ref{fig1-1} cannot be a multigraph describing $M$.

Suppose that Figure \ref{fig1-2} is the multigraph $\Gamma$ describing $M$ that satisfies the conditions in Lemma \ref{l28}. By Property (3) of Proposition \ref{p33}, we have that $a_1+a_2+a_3+a_4=-a_1+b_2+b_3+b_4=-c_1+a_2+a_3+b_2=a_4+b_3+b_4+c_1$. From this, it follows that $b_2=a_1+a_4+c_1$. Moreover, by Property (1) of Proposition \ref{p33}, we have that $a_1 a_2 a_3 a_4=\prod_{i=1}^4 w_{p_0,i}=-\prod_{i=1}^4 w_{p_1,i}=a_1 b_2 b_3 b_4$ and hence $a_2 a_3 a_4=b_2 b_3 b_4$. By Property (2) of Proposition \ref{p33}, we have that $-a_2 a_3 b_2 c_1=\prod_{i=1}^4 w_{p_3,i}=-\prod_{i=1}^4 w_{p_4,i}=-a_4 b_3 b_4 c_1$ and hence $a_2 a_3 b_2=a_4 b_3 b_4$. The two equations $a_2 a_3 a_4=b_2 b_3 b_4$ and $a_2 a_3 b_2=a_4 b_3 b_4$ imply that $a_4=b_2$ but $b_2=a_1+a_4+c_1>a_4$. Therefore, Figure \ref{fig1-2} cannot be a multigraph describing $M$.

Suppose that Figure \ref{fig1-3} is the multigraph $\Gamma$ describing $M$ that satisfies the conditions in Lemma \ref{l28}. By Property (3) of Proposition \ref{p33}, we have $a_1+a_2+a_3+a_4=-a_1+b_2+b_3+b_4=-c_1+a_2+b_2+b_3=a_3+a_4+b_4+c_1$. This implies that $a_1+a_2=b_4+c_1$ and $2a_1+a_2+a_3+a_4=b_2+b_3+b_4$. Next, Property (1) of Proposition \ref{p33} implies that $a_1 a_2 a_3 a_4=\prod_{i=1}^4 w_{p_0,i}=-\prod_{i=1}^4 w_{p_1,i}=a_1 b_2 b_3 b_4$ and hence $a_2 a_3 a_4=b_2 b_3 b_4$. By Property (2), we have that $-a_2 b_2 b_3 c_1=\prod_{i=1}^4 w_{p_3,i}=-\prod_{i=1}^4 w_{p_4,i}=-a_3 a_4 b_4 c_1$ and hence $a_2 b_2 b_3=a_3 a_4 b_4$. The equations $a_2 a_3 a_4=b_2 b_3 b_4$ and $a_2 b_2 b_3=a_3 a_4 b_4$ imply that $a_2=b_4$ and $a_3 a_4=b_2 b_3$. Since $a_1+a_2=b_4+c_1$, this implies that $a_1=c_1$. From $2a_1+a_2+a_3+a_4=b_2+b_3+b_4$, this also implies that $2a_1+a_3+a_4=b_2+b_3$. The weights at the fixed points are than $\Sigma_{p_0}=\{a_1,a_2,a_3,a_4\}$, $\Sigma_{p_1}=\{-a_1,a_2,b_2,b_3\}$, $\Sigma_{p_3}=\{-a_2,-b_2,-b_3,a_1\}$, $\Sigma_{p_4}=\{-a_1,-a_2,-a_3,-a_4\}$.

We push-forward the equivariant second Chern class $c_2(M)$ in equivariant cohomology. By Theorem \ref{t21},
\begin{center}
$\displaystyle 0=\int_M c_2(M)=\sum_{p \in M^{S^1}} \int_p \frac{c_2(M)|_p}{e_{S^1}(N_F)}=\sum_{p \in M^{S^1}} \frac{\sigma_2(w_{p,1},w_{p,2},w_{p,3},w_{p,4})}{\prod_{i=1}^4 w_{p,i}}$
$= \frac{a_1a_2+a_1a_3+a_1a_4+a_2a_3+a_2a_4+a_3a_4}{a_1 a_2 a_3 a_4}+\frac{-a_1a_2-a_1b_2-a_1b_3+a_2b_2+a_2b_3+b_2b_3}{-a_1 a_2 b_2 b_3} + \frac{-a_1a_2-a_1b_2-a_1b_3+a_2b_2+a_2b_3+b_2b_3}{-a_1 a_2 b_2 b_3}+\frac{a_1a_2+a_1a_3+a_1a_4+a_2a_3+a_2a_4+a_3a_4}{a_1 a_2 a_3 a_4}$.
\end{center}
Here, $\sigma_2$ is the second elementary symmetric polynomial in 4 variables. We have seen that the denominators are the same up to sign. With $2a_1+a_3+a_4=b_2+b_3$ and $a_3 a_4=b_2 b_3$, the equation above is equivalent to $a_1 a_4+a_2 a_3+a_1 b_2+a_1 b_3=0$, which cannot hold.

Suppose that Figure \ref{fig1-4} is the multigraph $\Gamma$ describing $M$ that satisfies the conditions in Lemma \ref{l28}. Without loss of generality, assume that $a_2 \leq a_3 \leq a_4$. By Property (1) of Proposition \ref{p33}, $a_1 a_2 a_3 a_4=\prod_{i=1}^4 w_{p_0,i}=-\prod_{i=1}^4 w_{p_1,i}=c_1 a_2 a_3 a_4$ and hence $a_1=c_1$.

We prove that $a_2,a_3<a_4$. Assume that $a_2=a_3=a_4$. Since $a_1<a_2$, $1<a_2$. Consider the set $M^{\mathbb{Z}_{a_2}}$ of points in $M$ that are fixed by the $\mathbb{Z}_{a_2}$-action, where $\mathbb{Z}_{a_2}$ acts on $M$ as a subgroup of $S^1$. The set $M^{\mathbb{Z}_{a_2}}$ is a union of smaller dimensional compact almost complex submanifolds. Let $Z$ be a connected component of $M^{\mathbb{Z}_{a_2}}$ that contains $p_0$. Since $p_0$ has 3 weights $a_2,a_3,a_4$ that are divisible by $a_2$, $\dim Z=6$. The circle action on $M$ restricts to a circle action $C$ on $Z$. Since $a_2=a_4$ is the largest weight, every weight in $T_pZ$ at a fixed point $p$ of the $C$-action on $Z$ is either $+a_2$ or $-a_2$. By Lemma \ref{l213}, the $C$-action on $Z$ has $\ell \cdot 2^3$ fixed points for some positive integer $\ell$. However, $Z^C \subset M^{S^1}$ but $M^{S^1}$ consists of 4 fixed points. Therefore, we cannot have $a_2=a_3=a_4$.

Next, assume that $a_2<a_3=a_4$. By an analogous argument, a connected component $Z$ of $M^{\mathbb{Z}_{a_3}}$ containing $p_0$ satisfies $\dim Z=4$ since $p_0$ has 2 weights $a_3,a_4$ that are divisible by $a_3$, and the restriction $C$ of the circle action on $Z$ has $\ell \cdot 2^2$ fixed points for some positive integer $\ell$. Since $Z^C \subset M^{S^1}$ and $M^{S^1}$ consists of 4 fixed points, this implies that $Z^C=M^{S^1}$. Next, by Lemma \ref{l213}, the weights in $T_pZ$ at the fixed points $p$ of the $C$-action on $Z$ are $\{a_3,a_3\}, \{-a_3,a_3\}, \{-a_3,a_3\}, \{-a_3,-a_3\}$. This implies that there must be a fixed point $p \in M^{S^1}$ such that $\{-a_3,a_3\} \subset \{w_{p,1},\cdots,w_{p,4}\}$. However, since $a_1=c_1<a_3$, there cannot be such a fixed point; $p_0$ does not have a negative weight, $p_1$ does not have a negative weight divisible by $a_2$ (the negative weight $-a_1$ is bigger than $-a_3$), $p_3$ does not have a positive weight divisible by $a_3$, and $p_4$ does not have a positive weight. Therefore, $a_2,a_3<a_4$.

By Property (2) of Lemma \ref{l28} for the edge whose label is $a_4$, the weights at $p_0$ and the weights at $p_4$ are equal modulo $a_4$, i.e., 
\begin{center}
$\{a_1,a_2,a_3,a_4\} \equiv \{-a_1,-a_2,-a_3,-a_4\} \mod a_4$. 
\end{center}
First, $a_4 \equiv -a_4 \mod a_4$. Next, consider $a_1$.

Suppose that $a_1 \equiv -a_1 \mod a_4$. Since $a_1<a_4$, this means that $2a_1=a_4$. Then we are left with $\{a_2,a_3\} \equiv \{-a_2,-a_3\} \mod a_4$. Since $a_2,a_3<a_4$ by the claim above, this implies that $a_2+a_3=a_4$. This leads to a contradiction since $a_4=2a_1<a_2+a_3=a_4$.

Next, suppose that $a_1 \equiv -a_2 \mod a_4$. This means that $a_1+a_2=a_4$. Then we must have $a_3 \equiv -a_3 \mod a_4$ and hence $2a_3=a_4$. This leads to a contradiction since $a_4=a_1+a_2 < 2a_3=a_4$.

Finally, suppose that $a_1 \equiv -a_3 \mod a_4$. This means that $a_1+a_3=a_4$. Then we must have $a_2 \equiv -a_2 \mod a_4$, i.e., $2a_2=a_4$. It follows that $a_1<a_2<a_3<a_4=2a_2$, and this means that a connected component $Z$ of $M^{\mathbb{Z}_{a_2}}$ containing $p_0$ satisfies $\dim Z=4$, since $p_0$ has 2 weights $a_2,a_4$ that are divisible by $a_2$. The circle action on $M$ restricts to a circle action $C$ on $Z$. Since $p_0 \in Z^C \subset M^{S^1}$, the $C$-action on $Z$ has a non-empty fixed point set and has at most 4 fixed points. Since $\dim Z=4$, by Theorem \ref{t211}, the $C$-action on $Z$ has either 3 fixed points or 4 fixed points. In either case, by Theorems \ref{t211} and \ref{t212}, there must be a fixed point in $Z$ for the $C$-action that has one positive weight and one negative weight that are both divisible by $a_2$, since every weight at a fixed point $p$ in $T_pZ$ is a multiple of $a_2$. However, since $a_1=c_1<a_2$, every fixed point $p$ of the circle action on $M$ cannot have one positive weight and one negative weight that are both divisible by $a_2$. Alternatively, one of the figures in Figure \ref{fig2} must lie in Figure \ref{fig1-4} as a subgraph in which the label of each edge is a multiple of $a_2$. Since $a_1=c_1<a_2$, this is impossible. Thus the theorem follows. \end{proof}

We have determined the Hirzebruch $\chi_y$-genus of an 8-dimensional compact almost complex manifold, equipped with a circle action having 4 fixed points. With this, we determine all the Chern numbers of the manifold.

\begin{theorem} \label{t35} Let the circle act on an 8-dimensional compact almost complex manifold $M$ with 4 fixed points. Then the Chern numbers of $M$ are 
\begin{center}
$\displaystyle \int_M c_1^4=\int_M c_1^2 c_2=\int_M c_2^2=0$, and $\displaystyle \int_M c_1 c_3=\int_M c_4=4$. 
\end{center}
In particular, $M$ is unitary cobordant to $S^2 \times S^6$. \end{theorem}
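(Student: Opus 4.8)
The plan is to evaluate all five Chern numbers $\int_M c_1^4,\ \int_M c_1^2 c_2,\ \int_M c_2^2,\ \int_M c_1 c_3,\ \int_M c_4$ by combining three inputs: the ABBV localization theorem (Theorem \ref{t21}), the value $\chi_y(M)=-y+2y^2-y^3$ from Theorem \ref{t33}, and the sum-of-weights dichotomy of Lemma \ref{l32}. The mechanism is standard: at an isolated fixed point $p$ the equivariant Chern classes restrict to the elementary symmetric polynomials $\sigma_k(w_{p,1},\dots,w_{p,4})$ in the weights, while the equivariant Euler class of the normal bundle is $P_p:=\prod_{i=1}^4 w_{p,i}$. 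Hence, writing $s_p=\sum_{i=1}^4 w_{p,i}$, localization turns the integral of any degree-$4$ monomial in the Chern classes into a sum over the four fixed points of the corresponding $\sigma$-polynomial divided by $P_p$; and for any equivariant class of total degree \emph{less than} $4$ that same sum must vanish, since the push-forward then lands in a negative cohomological degree of $\mathbb{CP}^\infty$. In particular $\sum_p 1/P_p=0$ and $\sum_p \sigma_2(w_p)/P_p=0$, which will be the two workhorse identities.

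First I would record the numbers forced by the genus alone. From $\sigma_4(w_p)=P_p$ we get $\int_M c_4=\sum_p \sigma_4(w_p)/P_p=\sum_p 1=4$ (equivalently, the Euler characteristic equals the number of fixed points). Next, $\chi_y(M)=-y+2y^2-y^3$ gives vanishing Todd genus $\chi_0(M)=0$ and vanishing signature $\chi_1(M)=0$. Expanding the degree-$8$ Todd class and $L$-class yields the two relations $-\int_M c_1^4 + 4\int_M c_1^2 c_2 + 3\int_M c_2^2 + \int_M c_1 c_3 - \int_M c_4 = 0$ and $-\int_M c_1^4 + 4\int_M c_1^2 c_2 + 3\int_M c_2^2 - 14\int_M c_1 c_3 + 14\int_M c_4 = 0$. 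Subtracting them and using $\int_M c_4=4$ gives $\int_M c_1 c_3 = 4$, and substituting back leaves the single relation $-\int_M c_1^4 + 4\int_M c_1^2 c_2 + 3\int_M c_2^2 = 0$.

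The crucial step is to kill the three remaining numbers via Lemma \ref{l32}. Applied with $n=4$, it says that either $s_p=0$ for all $p$ or the fixed points split into two pairs with $s$-values $a,a,-a,-a$; in both cases $s_p^2$ takes a single common value $c$ at all four fixed points. Then localization gives $\int_M c_1^4=\sum_p s_p^4/P_p=c^2\sum_p 1/P_p$ and $\int_M c_1^2 c_2=\sum_p s_p^2\,\sigma_2(w_p)/P_p=c\sum_p \sigma_2(w_p)/P_p$, and the two workhorse identities make both sums zero, so $\int_M c_1^4=\int_M c_1^2 c_2=0$. Feeding this into the surviving Todd relation forces $\int_M c_2^2=0$. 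These values coincide with the Chern numbers of $S^2\times S^6$ computed from $T(S^2\times S^6)\cong TS^2\oplus TS^6$; since two stably almost complex manifolds with equal Chern numbers are unitary cobordant, $M$ is unitary cobordant to $S^2\times S^6$.

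I expect the main obstacle to be the third paragraph: one has to notice that Lemma \ref{l32} is precisely strong enough to make $s_p^2$ independent of $p$, which is exactly what allows the factor $s_p^2$ (resp.\ $s_p^4$) to be pulled outside the localization sums for $\int_M c_1^2 c_2$ and $\int_M c_1^4$. A secondary point requiring care is the bookkeeping of the degree-$8$ Todd and $L$-polynomials in terms of $c_1,\dots,c_4$. I note also that the first alternative of Lemma \ref{l32} (all $s_p=0$) would force $\int_M c_1 c_3=0$, contradicting $\int_M c_1 c_3=4$, so only the paired case actually occurs, though this observation is not needed to carry out the computation of $\int_M c_1^4$ and $\int_M c_1^2 c_2$.
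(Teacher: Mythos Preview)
Your proof is correct and follows essentially the same route as the paper's: compute $\int_M c_4=4$, extract $\int_M c_1c_3=4$ and the relation $-\int_M c_1^4+4\int_M c_1^2c_2+3\int_M c_2^2=0$ from the $\chi_y$-genus, then use Lemma~\ref{l32} together with the vanishing of $\int_M 1$ and $\int_M c_2$ to kill $\int_M c_1^4$ and $\int_M c_1^2c_2$. Your observation that $s_p^2$ is the same constant at every fixed point in \emph{both} alternatives of Lemma~\ref{l32} is a neat unification---the paper treats the two cases separately---and your closing remark that Case~(1) is in fact incompatible with $\int_M c_1c_3=4$ is correct and not noted in the paper.
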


\begin{proof} The Hirzebruch $\chi_y$-genus of $M$ is $\chi_y(M)=\sum_{i=0}^4 \chi^i(M) \cdot y^i$, where $\chi^i(M)=\int_M T_i^4$ and
\begin{center}
$\displaystyle T_0^4=\frac{-c_1^4+4c_1^2c_2+3c_2^2+c_1c_3-c_4}{720}=T_4^4$,

$\displaystyle T_1^4=\frac{-c_1^4+4c_1^2c_2+3c_2^2-14c_1c_3-31c_4}{180}=T_3^4$, and

$\displaystyle T_2^4=\frac{-c_1^4+4c_1^2c_2+3c_2^2-19c_1c_3+79c_4}{120}$.
\end{center}
For these facts, see \cite{GS} for instance. By Theorem \ref{t33}, the Hirzebruch $\chi_y$-genus of $M$ is $\chi_y(M)=-y+2y^2-y^3$. Therefore, we have that $\int_M T_0^4=0$, $\int_M T_1^4=-1$, and $\int_M T_2^4=2$. On the other hand,
\begin{center}
$\displaystyle \int_M c_4=\sum_{p \in M^{S^1}} \frac{c_4(M)|_p}{\prod_{i=1}^4 w_{p,i}}=\sum_{p \in M^{S^1}} \frac{\prod_{i=1}^4 w_{p,i}}{\prod_{i=1}^4 w_{p,i}}=4$.
\end{center}
Then we have
\begin{center}
$\displaystyle 0=\int_M T_0^4=\frac{\{\int_M (-c_1^4+4c_1^2c_2+3c_2^2+c_1c_3)\}-4}{720}$,

$\displaystyle -1=\int_M T_1^4=\frac{\{\int_M (-c_1^4+4c_1^2c_2+3c_2^2-14c_1c_3)\}-124}{180}$, and

$\displaystyle 2=\int_M T_2^4=\frac{(\{\int_M (-c_1^4+4c_1^2c_2+3c_2^2-19c_1c_3)\} + 316}{120}$.
\end{center}
These imply that $\int_M c_1c_3=4$ and $\int_M (-c_1^4+ 4c_1^2 c_2+3c_2^2)=0$.

By Lemma \ref{l32}, one of the following holds:
\begin{enumerate}[(1)]
\item $\sum_{i=1}^4 w_{p,i}=0$ for all $p \in M^{S^1}$ and $\displaystyle \sum_{p \in M^{S^1}} \frac{1}{\prod_{i=1}^n w_{p,i}}=0$.
\item We can divide the fixed points into 2 pairs $(p_1,p_2)$ and $(p_3,p_4)$ such that
\begin{enumerate}[(a)]
\item $\prod_{i=1}^4 w_{p_1,i}=-\prod_{i=1}^4 w_{p_2,i}$,
\item $\prod_{i=1}^4 w_{p_3,i}=-\prod_{i=1}^4 w_{p_4,i}$, and
\item $\sum_{i=1}^4 w_{p_1,i}=\sum_{i=1}^4 w_{p_2,i}=-\sum_{i=1}^4 w_{p_3,i}=-\sum_{i=1}^4 w_{p_4,i}$.
\end{enumerate}
\end{enumerate}

Suppose that Case (1) holds. Since $c_1(M)|_p=\sum_{i=1}^4 w_{p,i}=0$ for any fixed point $p \in M^{S^1}$,
\begin{center}
$\displaystyle \int_M c_1^4=\sum_{p \in M^{S^1}} \frac{c_1^4(M)|_p}{\prod_{i=1}^4 w_{p,i}}=\sum_{p \in M^{S^1}} \frac{(\sum_{i=1}^4 w_{p,i})^4}{\prod_{i=1}^4 w_{p,i}}=0$
\end{center}
and similarly
\begin{center}
$\displaystyle \int_M c_1^2 c^2=\sum_{p \in M^{S^1}} \frac{c_1^2(M)|_p \cdot c_2(M)|_p}{\prod_{i=1}^4 w_{p,i}}=\sum_{p \in M^{S^1}} \frac{(\sum_{i=1}^4 w_{p,i})^2 \cdot c_2(M)|_p}{\prod_{i=1}^4 w_{p,i}}=0$. 
\end{center}
Since $\int_M (-c_1^4+ 4c_1^2 c_2+3c_2^2)=0$, it follows that $\int_M c_2^2=0$.

Suppose that Case (2) holds. Let 
\begin{center}
$\displaystyle a=\sum_{i=1}^4 w_{p_1,i}=\sum_{i=1}^4 w_{p_2,i}=-\sum_{i=1}^4 w_{p_3,i}=-\sum_{i=1}^4 w_{p_4,i}$, 

$\displaystyle A=\prod_{i=1}^4 w_{p_1,i}=-\prod_{i=1}^4 w_{p_2,i}$, and 

$\displaystyle B=\prod_{i=1}^4 w_{p_3,i}=-\prod_{i=1}^4 w_{p_4,i}$. 
\end{center}
Then we have
\begin{center}
$\displaystyle \int_M c_1^4=\sum_{j=1}^4 \frac{(\sum_{i=1}^4 w_{p_j,i})^4}{\prod_{i=1}^4 w_{p_j,i}}=\frac{a^4}{A}+\frac{a^4}{-A}+\frac{(-a)^4}{B}+\frac{(-a)^4}{-B}=0$.
\end{center}
Next, by a dimension reason, we have
\begin{center}
$\displaystyle 0=\int_M c_2=\frac{c_2(M)|_{p_1}}{A}+\frac{c_2(M)|_{p_2}}{-A}+\frac{c_2(M)|_{p_3}}{B}+\frac{c_2(M)|_{p_4}}{-B}$.
\end{center}
Therefore, we have
\begin{center}
$\displaystyle \int_M c_1^2 c_2=\sum_{p \in M^{S^1}} \frac{c_1^2(M)|_p \cdot c_2(M)|_p}{\prod_{i=1}^4 w_{p,i}}=\frac{a^2 \cdot c_2(M)|_{p_1}}{A}+\frac{a^2 \cdot c_2(M)|_{p_2}}{-A}+\frac{(-a)^2 \cdot c_2(M)|_{p_3}}{B}+\frac{(-a)^2 \cdot c_2(M)|_{p_4}}{-B}=a^2 \int_M c_2=0$.
\end{center}
Since $\int_M (-c_1^4+ 4c_1^2 c_2+3c_2^2)=0$, it follows that $\int_M c_2^2=0$. 

We have proved that any circle action on an 8-dimensional compact almost complex manifold with 4 fixed points has the unique Chern numbers. Therefore, any two such manifolds are equivariantly cobordant; see \cite{GGK}. In particular, since $S^2 \times S^6$ is such a manifold, $M$ is equivariantly cobordant to $S^2 \times S^6$. \end{proof}

\section{$S^2 \times S^6$ and $S^6 \times S^6$, and discussion for higher dimensions} \label{s4}

\begin{figure}
\begin{subfigure}[b][4.5cm][s]{.10\textwidth}
\centering
\vfill
\begin{tikzpicture}[state/.style ={circle, draw}]
\node[state] (a) {$p_1$};
\node[state] (b) [above=of a] {$p_2$};
\path (a) [->]edge node [right] {$a$} (b);
\end{tikzpicture}
\vfill
\caption{$S^2$}\label{fig3-1}
\vspace{\baselineskip}
\end{subfigure}\qquad
\begin{subfigure}[b][4.5cm][s]{.10\textwidth}
\centering
\vfill
\begin{tikzpicture}[state/.style ={circle, draw}]
\node[state] (a) {$p_1$};
\node[state] (b) [above=of a] {$p_2$};
\path (a) [->] [bend left =5]edge node[left] {$c$} (b);
\path (a) [->] [bend left =30]edge node[left] {$b$} (b);
\path (b) [->] [bend left =10]edge node[right] {$b+c$} (a);
\end{tikzpicture}
\vfill
\caption{$S^6$}\label{fig3-2}
\vspace{\baselineskip}
\end{subfigure}\qquad
\begin{subfigure}[b][4.5cm][s]{.30\textwidth}
\centering
\vfill
\begin{tikzpicture}[state/.style ={circle, draw}]
\node[state] (a) {$p_1$};
\node[state] (b) [above=of a] {$p_2$};
\node[state] (c) [right=of a]{$p_3$};
\node[state] (d) [above=of c] {$p_4$};
\path (a) [->] [bend left =5]edge node[left] {$c$} (b);
\path (a) [->] [bend left =30]edge node[left] {$b$} (b);
\path (b) [->] [bend left =15]edge node[right] {$b+c$} (a);
\path (c) [->] [bend left =5]edge node[left] {$c$} (d);
\path (c) [->] [bend left =30]edge node[left] {$b$} (d);
\path (d) [->] [bend left =15]edge node[right] {$b+c$} (c);
\path (a) [->] edge node[below] {$a$} (c);
\path (b) [->] edge node[above] {$a$} (d);
\end{tikzpicture}
\vfill
\caption{$S^2 \times S^6$}\label{fig3-3}
\vspace{\baselineskip}
\end{subfigure}\qquad
\begin{subfigure}[b][4.5cm][s]{.30\textwidth}
\centering
\vfill
\begin{tikzpicture}[state/.style ={circle, draw}]
\node[state] (a) {$p_1$};
\node[state] (b) [above=of a] {$p_2$};
\node[state] (c) [right=of a]{$p_3$};
\node[state] (d) [above=of c] {$p_4$};
\path (a) [->] [bend left =5]edge node[left] {$b_1$} (b);
\path (a) [->] [bend left =45]edge node[left] {$a_1$} (b);
\path (b) [->] [bend left =15]edge node[right] {$a_1+b_1$} (a);
\path (c) [->] [bend right =5]edge node[right] {$b_1$} (d);
\path (c) [->] [bend right =45]edge node[right] {$a_1$} (d);
\path (d) [->] [bend right =15]edge node[left] {$a_1+b_1$} (c);
\path (a) [->] [bend left =5]edge node[below] {$b_2$} (c);
\path (a) [->] [bend left =35]edge node[below] {$a_2$} (c);
\path (c) [->] [bend left =45]edge node[below] {$a_2+b_2$} (a);
\path (b) [->] [bend left =10]edge node[above] {$b_2$} (d);
\path (b) [->] [bend left =45]edge node[above] {$a_2$} (d);
\path (d) [->] [bend left =10]edge node[below] {$a_2+b_2$} (b);
\end{tikzpicture}
\vfill
\caption{$S^6 \times S^6$}\label{fig3-4}
\vspace{\baselineskip}
\end{subfigure}\qquad
\caption{Multigraphs for $S^2$, $S^6$, $S^2 \times S^6$, and $S^6 \times S^6$}\label{fig3}
\end{figure}
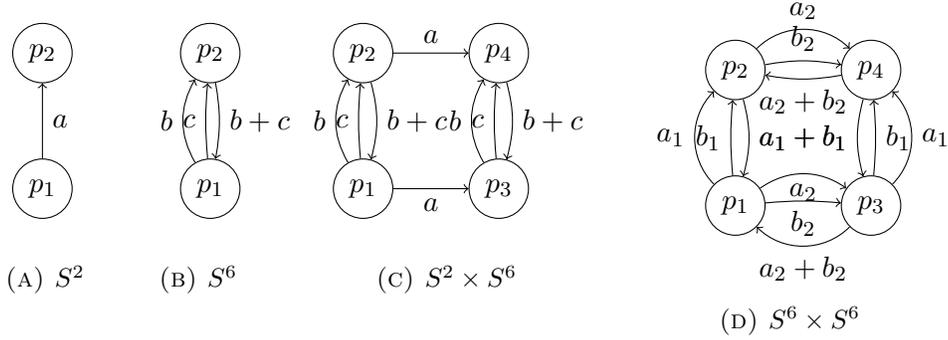

Rotating the 2-sphere $S^2$ $a$-times has two fixed points, the north pole and the south pole. The weight at the north pole is $-a$ and the weight at the south pole is $a$. The multigraph describing the action is Figure \ref{fig3-1}.

For $S^6$, there exists a circle action on $S^6$ with 2 fixed points. The weights at the fixed points are $\{-b-c,b,c\}$ and $\{-b,-c,b+c\}$ for some positive integers $b$ and $c$. The multigraph describing the action on $S^6$ is Figure \ref{fig3-2}. Then the product manifold $S^2 \times S^6$ equipped with a diagonal action has 4 fixed points, and the weights at the fixed points are $\{-b-c,a,b,c\}$, $\{-b,-c,a,b+c\}$, $\{-a,-b-c,b,c\}$, and $\{-a,-b,-c,b+c\}$. The multigraph describing $S^2 \times S^6$ is Figure \ref{fig3-3}. From the weights at the fixed points, one can compute the Hirzebruch $\chi_y$-genus of $S^2 \times S^6$ and the Chern numbers $\int c_1^4$, $\int c_1^2 c_2$, $\int c_2^2$, $\int c_1c_3$, $\int c_4$, and check that these agree with those in Theorem \ref{t11}.

For $i=1,2$, let $M_i$ be $S^6$ equipped with a circle action having 2 fixed points whose weights are $\{-a_i-b_i,a_i,b_i\}$ and $\{-a_i,-b_i,a_i+b_i\}$ for some positive integers $a_i$ and $b_i$. Then the product manifold $S^6 \times S^6 = M_1 \times M_2$ equipped with a diagonal action has 4 fixed points that have weights $\{-a_1-b_1,a_1,b_1,-a_2-b_2,a_2,b_2\}$, $\{-a_1-b_1,a_1,b_1,-a_2,-b_2,a_2+b_2\}$, $\{-a_1,-b_1,a_1+b_1,-a_2-b_2,a_2,b_2\}$, $\{-a_1,-b_1,a_1+b_1,-a_2,-b_2,a_2+b_4\}$. Figure \ref{fig3-4} is the multigraph that describes this action. The manifold $S^6 \times S^6$ is the only known example of a 12-dimensional manifold with 4 fixed points, to the author's knowledge.

For the action on $S^2 \times S^6$ above, the first two fixed points have $\sum_{i=1}^4 w_{p,i}=a$ and the next two fixed points have $\sum_{i=1}^4 w_{p,i}=-a$. This is Case (2) of Lemma \ref{l32}. Property (2-c) of Lemma \ref{l32} says that the product of the weights at $p_1$ and the product of the weights at $p_2$ are the negative of each other. Moreover, the weights at $p_1$ and the weights at $p_2$ agree up to sign. Similarly, the weights at $p_3$ and the weights $p_4$ agree up to sign. In fact, the weights at any two fixed points agree up to sign. And this phenomenon is also true for a diagonal action on $S^6 \times S^6$ with 4 fixed points. Therefore, for a higher dimensional manifold with 4 fixed points, one may ask if this is a common phenomena: Let $n \geq 4$. Let the circle act on a $2n$-dimensional compact almost complex manifold $M$ with 4 fixed points. Then for any pair $(p,q)$ of fixed points, do the weights at $p$ and the weights at $q$ agree up to sign? If the answer is affirmative, this would simplify the classification for higher dimensional manifolds with 4 fixed points.




\end{document}